\documentclass[11pt]{amsart}

\usepackage{amsmath, amssymb, amscd, mathrsfs, url, pinlabel,setspace,hyperref,verbatim}
\usepackage[margin=1.25in,marginparwidth=1in,centering,letterpaper,dvips]{geometry}
\usepackage{color,dcpic,latexsym,graphicx,epstopdf,comment}
\usepackage[all]{xy}
\usepackage{tikz,pgfplots}
\usepackage{tikz-cd,circuitikz}
\usepackage{enumitem,upquote,tabularx,textcomp,setspace,color}
\usepackage{makecell}
\usepackage{caption} 
\xyoption{rotate}
\DeclareMathAlphabet{\mathpzc}{OT1}{pzc}{m}{it}
\SetMathAlphabet{\mathpzc}{bold}{OT1}{pzc}{b}{it}

\newtheorem*{rep@theorem}{\rep@title}
\newcommand{\newreptheorem}[2]{%
\newenvironment{rep#1}[1]{%
 \def\rep@title{#2 \ref{##1}}%
 \begin{rep@theorem}}%
 {\end{rep@theorem}}}
\makeatother

\newtheorem {theorem}{Theorem}
\newreptheorem{theorem}{Theorem}
\newtheorem {lemma}[theorem]{Lemma}
\newtheorem {proposition}[theorem]{Proposition}
\newtheorem {corollary}[theorem]{Corollary}

\numberwithin{equation}{section}
\numberwithin{theorem}{section}

\theoremstyle{definition}
\newtheorem{definition}[theorem]{Definition}
\newtheorem{construction}[theorem]{Construction}
\newtheorem{data}[theorem]{Data}
\newtheorem{notation}[theorem]{Notation}

\newtheorem{remark}[theorem]{Remark}

\newtheorem{example}[theorem]{Example}
\newtheorem*{ack}{Acknowledgement}
\newtheorem*{org}{Organization}

\newlist{pcases}{enumerate}{1}
\setlist[pcases]{
  label=\bf{Case~\arabic*:}\protect\thiscase.~,
  ref=\arabic*,
  align=left,
  labelsep=0pt,
  leftmargin=0pt,
  labelwidth=0pt,
  parsep=0pt
}
\newcommand{\case}[1][]{%
  \if\relax\detokenize{#1}\relax
    \def\thiscase{}%
  \else
    \def\thiscase{~#1}%
  \fi
  \item
}

\newcommand{\bu}{\bullet}

\newcommand{\Z}{\mathbb{Z}}

\newcommand{\R}{\mathbb{R}}
\newcommand{\C}{\mathbb{C}}

\newcommand{\CP}{\mathbb{CP}}

\newcommand{\Q}{\mathbb{Q}}

\newcommand{\spinc}{\operatorname{Spin}^c}

\newcommand{\topo}{\mathcal{T}}

\DeclareMathOperator{\id}{id}

\DeclareMathOperator{\colim}{colim}

\newcommand{\bF}{\mathbb{F}}

\newcommand{\cB}{\mathcal{B}}

\newcommand{\cD}{\mathcal{D}}

\newcommand{\sR}{\mathscr{R}}

\newcommand{\cR}{\mathcal{R}}

\newcommand{\bI}{\mathbb{I}}

\newcommand{\fs}{\mathfrak{s}}
\newcommand{\ft}{\mathfrak{t}}

\newcommand{\ov}[1]{{\overline{#1}}}

\newcommand{\wti}[1]{{\widetilde{#1}}}

\DeclareFontFamily{U}{mathx}{\hyphenchar\font45}
\DeclareFontShape{U}{mathx}{m}{n}{
      <5> <6> <7> <8> <9> <10>
      <10.95> <12> <14.4> <17.28> <20.74> <24.88>
      mathx10
      }{}
\DeclareSymbolFont{mathx}{U}{mathx}{m}{n}
\DeclareFontSubstitution{U}{mathx}{m}{n}
\DeclareMathAccent{\widecheck}{0}{mathx}{"71}

\newcommand{\Dc}{D^{\mathrm{c}}}
\newcommand{\Dcc}{D^{\mathrm{cc}}}

\newcommand{\pt}{\mathrm{pt}}

\newcommand{\bfB}{\mathbf{B}}

\usetikzlibrary{calc,intersections}
\tikzset{every picture/.style=thick}
\tikzset{link/.style = { white, double = black, line width = 1.75pt, double distance = 1.25pt, looseness=1.75 }}
\tikzset{crossing/.style = {draw, circle, dotted, minimum size=0.5cm, inner sep=0, outer sep=0}}
\pgfplotsset{compat=1.12}

\usepackage{extarrows}
\usepackage{mathabx}
\usepackage{amsbsy}
\usepackage{appendix}
\usepackage{float}
\usepackage{subfigure}
\usepackage[numbers,sort&compress]{natbib}
\usepackage{amsthm}
\usepackage{geometry}
\usepackage{fancyhdr}
\usepackage{overpic}
\usepackage{mathrsfs}
\usepackage{colonequals}
\usepackage{microtype}
\usepackage{diagbox}
\usepackage{tabularx}

\newcommand{\bpf}{\begin{proof}}
\newcommand{\epf}{\end{proof}}
\newcommand{\bthm}{\begin{theorem}}
\newcommand{\ethm}{\end{theorem}}
\newcommand{\bprop}{\begin{proposition}}
\newcommand{\eprop}{\end{proposition}}
\newcommand{\bcor}{\begin{corollary}}
\newcommand{\ecor}{\end{corollary}}
\newcommand{\blem}{\begin{lemma}}
\newcommand{\elem}{\end{lemma}}
\newcommand{\bdefn}{\begin{definition}}
\newcommand{\edefn}{\end{definition}}
\newcommand{\bcons}{\begin{construction}}
\newcommand{\econs}{\end{construction}}
\newcommand{\bdata}{\begin{data}}
\newcommand{\edata}{\end{data}}
\newcommand{\bexmp}{\begin{example}}
\newcommand{\eexmp}{\end{example}}
\newcommand{\brem}{\begin{remark}}
\newcommand{\erem}{\end{remark}}
\newcommand{\bnot}{\begin{notation}}
\newcommand{\enot}{\end{notation}}
\newcommand{\benu}{\begin{enumerate}}
\newcommand{\benum}{\begin{enumerate}[leftmargin=*]}
\newcommand{\eenu}{\end{enumerate}}
\newcommand{\beq}{\begin{equation}}
\newcommand{\eeq}{\end{equation}}

\newcommand{\al}{\alpha}
\newcommand{\be}{\beta}
\newcommand{\ga}{\gamma}
\newcommand{\Ga}{\Gamma}

\newcommand{\pa}{\partial}
\newcommand{\ot}{\otimes}
\newcommand{\op}{\oplus}

\newcommand{\aand}{~\mathrm{and}~}

\definecolor{lygreen}{HTML}{016646}
\title{Monopole triangle over integers}

\author{Haochen Qiu}
\address{Department of Pure Mathematics and Mathematical Statistics\\University of Cambridge}
\email{hq238@cam.ac.uk}

\author{Fan Ye}
\address{School of Mathematical Sciences\\Peking University}
\email{flyye@math.pku.edu.cn}

\makeatletter

\begin{document}

\begin{abstract}
We prove the surgery exact triangle for monopole (Seiberg--Witten) Floer homology over integer coefficients, extending the work of Kronheimer--Mrowka--Ozsv\'{a}th--Szab\'{o} over $\mathbb{Z}/2$, Lin--Ruberman--Saveliev over $\mathbb{Q}$, and Freeman over $\mathbb{Z}[\sqrt{-1}]$. Our proof is based on a modification of Kronheimer--Mrowka's local system on monopole Floer homology and an adaptation of Freeman's computation. As a standard application, following Bloom and Scaduto, we obtain a spectral sequence $\widetilde{Kh}_{\mathrm{odd}}(L)\Rightarrow \widetilde{HM}_\bullet(-\Sigma_2(L))$ over integer coefficients for an oriented link $L\subset S^3$, thereby solving Ozsv\'{a}th--Rasmussen--Szab\'{o}'s conjecture.

\end{abstract}
\maketitle
\section{Introduction}
The surgery exact triangle is a powerful tool to study Floer homology of $3$-manifolds via Dehn surgery on a knot. It was initially introduced by Floer \cite{floer1988instanton, floer1990knot} in instanton theory and later reviewed in \cite{donaldson95triangle,scaduto2015instantons}. Subsequently, analogous triangles were developed in monopole, Heegaard Floer, singular instanton, and variants of those Floer theories \cite{kronheimer2007monopoles, kronheimer2007monopolesandlens, Ozsvath2004c, ozsvath2005double,kronheimer2011khovanov,Lin17triangle,HHSZ25triangle,bhat2023newtriangle,jiakai2023triangle}. The three maps in the surgery exact triangle are typically the cobordism maps associated to the surgery cobordisms. In this setting, iterating the triangle for a link yields a spectral sequence whose second page is (a variant of) the Khovanov homology of the link \cite{Bloom2011link,scaduto2015instantons,KM2021Barnatan,Lin19involutive,ATZ23ss,jiakai2023triangle}.

The triangle in instanton theory was constructed over integer coefficients, whereas those in monopole and Heegaard Floer theories are usually only over $\Z/2$ coefficients. The choice of the coefficients is essential, as the composition of two consecutive surgery cobordism maps in the surgery exact triangle only has a factor of $2$ and does not vanish in general; see \cite[\S 8]{KP2021bordered} and \cite[Remark 5.3]{Freeman2021triangle}.

An approach to resolving this issue is to introduce some ad hoc sign arrangement on each spin$^c$ component of the cobordism maps so that the composition of the modified cobordism maps vanishes; see Ozsv\'{a}th--Szab\'{o} \cite[Proposition 9.6]{Ozsvath2004c} for the Heegaard Floer triangle over $\Z$ and Lin--Ruberman--Saveliev \cite[Proposition 4.4]{LRS2023triangle} for the monopole triangle over $\Q$. However, such ad hoc arrangements make it hard to generalize the triangle to the link spectral sequence.

Another approach is to introduce some local coefficients (also called twisted coefficients) so that the signs appear naturally. Recently, Abouzaid--Manolescu \cite{AM2025sign} proposed a more canonical sign arrangement in the Heegaard Floer theory and proved the naturality and the surgery exact triangle over $\Z$. For the triangle, they used some homological trick to get rid of the twisted coefficients of all three Floer homology groups but keep the cobordism maps twisted. A similar trick was used by Baldwin--Sivek \cite[\S 2.2]{baldwin2020concordance} in instanton theory. One may expect that Abouzaid--Manolescu's triangle can be iterated for a link, but some extra care may be needed for the integral lifts of the cobordism maps on the first page of the spectral sequence to obtain the odd Khovanov homology. We will not pursue this direction in this paper. The advantage of monopole Floer theory is that the cobordism map over $\Z$ is already well-defined in Kronheimer--Mrowka's book \cite{kronheimer2007monopoles}.

In monopole theory, Freeman's PhD thesis \cite{Freeman2021triangle} introduced a local system with fiber $\Z[\sqrt{-1}]$ to prove the surgery exact triangle over the associated local coefficients. This local system is somewhat nonstandard and the related computations become complicated.

In this paper, we transfer Kronheimer--Mrowka's construction of the local system in singular instanton theory in \cite{kronheimer2011knot,KM2019deformation,KM2021Barnatan,KM2025relation} to the monopole theory and adapt Freeman's computation to obtain the monopole surgery exact triangle over $\Z$. We use a similar homological trick so that none of the three Floer homology groups carry local coefficients and only the cobordism maps are twisted. 

This construction can be easily iterated for a link and induces a spectral sequence from the reduced odd Khovanov homology $\widetilde{Kh}_{\mathrm{odd}}(L)$ of a link $L$ to the tilde version of monopole Floer homology $\widetilde{HM}_\bu(-\Sigma_2(L))$ of the double branched cover $\Sigma_2(L)$ with the opposite orientation. 

An advantage of the Floer homology over $\Z$ is in the context of contact elements: a vector space over $\Z/2$ only has finitely many elements, whereas a $\Z$-module can have infinitely many elements realized by contact elements; see \cite[Theorem 2 and Corollary 2]{Massot12contact} for the example of $T^3$ in Heegaard Floer homology. On the other hand, see also \cite{JM2008HF} for the existence of torsion elements in Heegaard Floer homology of a closed oriented surface times a circle.

Meanwhile, since the (ordinary) homology with local coefficients is related to the homology of some covering space \cite[\S 3.H]{hatcher2002AT}, our work may suggest a better way to construct the surgery exact triangle for Seiberg--Witten Floer spectra mentioned in \cite[\S 3.6]{DSS23spectra}.

Now we start to state the main theorems of the paper. For simplicity, let $HM$ denote any fixed version of monopole Floer homology \begin{equation*}\label{eq: HM}
    \widehat{HM}_\bu \text{ (from)}, ~\widecheck{HM}_\bu\text{ (to)},~ \overline{HM}_\bu\text{ (bar)},\text{ and } \widetilde{HM}_\bu\text{ (tilde)},
\end{equation*}where the first three are defined by Kronheimer--Mrowka \cite{kronheimer2007monopoles} and the last one is defined by Bloom \cite{Bloom2011link} as the mapping cone of the $U$-action. Through the celebrated result ``$HF=HM=ECH$" \cite{kutluhan2010hf,taubes2010ech1,colin2011equivalence}, the four versions of $HM$ are isomorphic to the corresponding versions of Heegaard Floer homology \begin{equation*}\label{eq: HF} \mathbf{HF}^- \text{ (minus)}, ~\mathbf{HF}^+\text{ (plus)},~ \mathbf{HF}^\infty\text{ (infinity)},\text{ and } \widehat{\mathbf{HF}}\text{ (hat)},
\end{equation*}where the bold symbols denote the homology of the completion of the original Heegaard Floer chain complex from $\Z[U]$ to $\Z[[U]]$, which equals the original homology for the plus and hat versions.

\bdefn\label{defn: surgery triad}
A \emph{surgery triad} $(Y_0,Y_1,Y_2)$ is a cyclically ordered tuple of closed $3$-manifolds $Y_i$ for $i\in\Z/3$ obtained from a $3$-manifold $M$ with torus boundary by Dehn filling along an oriented simple closed curve $\ga_i\subset \partial M\cong T^2$ such that the algebraic intersection numbers satisfy\[\ga_0\cdot \ga_1=\ga_1\cdot \ga_2=\ga_2\cdot \ga_0=-1.\]Equivalently, for some fixed $i\in\Z/3$, let $K_i$ be the core knot $S^1\times\{0\}$ of the Dehn filling solid torus $S^1\times D^2$ for $\ga_i$ with the framing induced by $\ga_{i+1}$. Then a surgery triad consists of Dehn surgery manifolds obtained from $Y_i$ with slopes $(\infty,0,1)$. The neighborhoods of the core knots are called the \emph{surgery region}. There is a cobordism $W_i:Y_i\to Y_{i+1}$ obtained from $Y_i\times I$ by attaching $2$-handle along $\ga_i\times \{1\}$ with framing $\ga_{i+1}$, called the \emph{surgery cobordism}.
\edefn
The following is the first main theorem of the paper.
\bthm\label{thm: main triangle}
Let $(Y_0,Y_1,Y_2)$ be a surgery triad and let $W_i:Y_i\to Y_{i+1}$ for $i\in\Z/3$ be the corresponding surgery cobordism. Then there exists an exact triangle (over integer coefficients)
\[	\xymatrix{
	HM(Y_0)\ar[rr]^{HM(W_0;\Ga_{\nu_0})}&& HM(Y_1)\ar[dl]^{\quad HM(W_1;\Ga_{\nu_1})}\\
	&HM(Y_2)\ar[lu]^{HM(W_2;\Ga_{\nu_2})\quad}&
	}\] where $HM(W_i;\Ga_{\nu_i})$ is the cobordism map for some local system associated to some smooth relative $2$-chain $\nu_i\subset W_i$.
\ethm
\brem
Recall that the total sign of a cobordism map depends on the choice of the homology orientation. Note that the kernel and the image of a map are independent of the sign, so is the exactness. Hence we do not need to specify the homology orientations of the cobordism maps in Theorem \ref{thm: main triangle}.
\erem
\brem
 Indeed, one of $\nu_i$ in Theorem \ref{thm: main triangle} is empty and the corresponding cobordism map is the usual one over $\Z$ without local system. For the other two $\nu_i$, one is the union of a core disk in the corresponding cobordism and a $2$-chain in the source, the other is the union of a cocore disk and the same $2$-chain with opposite sign in the target. See Corollary \ref{cor: triangle 2}. We will show in \S \ref{sec: Triangle over integer coefficients} that the cobordism maps in Theorem \ref{thm: main triangle} recover those in \cite[\S 4]{LRS2023triangle} over $\Q$; see Proposition \ref{prop: recover LRS}.
\erem
The exact triangle in Theorem \ref{thm: main triangle} can be iterated to obtain the following spectral sequence.
\bthm\label{thm: main ss}
Let $L$ be a framed link in a closed oriented $3$-manifold $Y$ with ordered $l$ components. Given $v\in\{0,1\}^l$, let $Y_{v}=Y_v(L)$ be obtained from $Y$ by Dehn surgery on $L$ via the slopes in $v$. Then there exists a spectral sequence depending only on $(Y,L)$\[E^1=\bigoplus_{v\in \{0,1\}^l}HM(Y_v)	\Longrightarrow HM(Y;\Ga_{L}),\]where $\Ga_{L}$ is some local system associated to $L$ and $HM(Y;\Ga_{L})$ is a $\Z$-module (or $\Z[[U]]$-module for some version of $HM$) whose isomorphism class only depends on the homology class $[L]\in H_1(Y;\Z/2)$. Moreover, we have\[d^1=\sum_{\substack{|v-w|=1\\v<w}}\pm HM(W_{vw}),\]where $|\cdot|$ is the $L^1$-norm, i.e.\ the number of $1$s in the entry, $<$ is the partial order induced by $0<1$, $W_{vw}=W_{vw}(L)$ is the surgery cobordism with source $Y_v$ and target $Y_w$, and the sign is induced by suitable homology orientation and the entries of $v$ and $w$. Note that there is no local system in $d^1$. Moreover, the spectral sequence inherits a $\Z\op\Z/2$-grading so that the grading shift of the differential $d^r$ in the $r$-th page is $(r,1)$.
\ethm
Following Bloom \cite[Theorem 1.3]{Bloom2011link} and Scaduto \cite[Theorem 1.1]{scaduto2015instantons}, the second page in Theorem \ref{thm: main ss} is related to the odd Khovanov homology.
\bthm\label{thm: main kh}
Let $L\subset S^3$ be an oriented link and let $\cD$ be a planar diagram of $L$. Then there exists a spectral sequence depending only on $(L,\cD)$\[E^2=\widetilde{Kh}_{\mathrm{odd}}(L)=\widetilde{Kh}_{\mathrm{odd}}(\cD)	\Longrightarrow \widetilde{HM}_\bu(-\Sigma_2(L)).\]Moreover, the spectral sequence inherits a $\Z\op\Z/2$-grading so that the grading shift of the differential $d^r$ in the $r$-th page is $(r,1)$, and the grading on $\widetilde{Kh}_{\mathrm{odd}}$ is given by \[\delta-\frac{\sigma(L)+\nu(L)}{2}=-t+\frac{q-\sigma(L)+\nu(L)}{2}\mod 2\]for the signature $\sigma$, the nullity $\nu$, the quantum grading $q$, and the homological grading $t$.
\ethm
\brem\label{rem: solve conj}
Together with the isomorphism $\widetilde{HM}_\bu(-\Sigma_2(L))\cong \widehat{HF}(-\Sigma_2(L))$, Theorem \ref{thm: main kh} solves Ozsv\'{a}th--Rasmussen--Szab\'{o}'s conjecture \cite[Conjecture 1.1]{ORS2013oddkh}. From \cite[Proposition 1.8]{ORS2013oddkh}, the unreduced odd Khovanov homology is the direct sum of two copies of the reduced one (with grading shifts), which is a different phenomenon from the (even) Khovanov homology. Meanwhile, the reduced odd Khovanov homology can contain substantial $\Z$-torsion so the spectral sequence over $\Z$ is significantly stronger than that over $\Z/2$. For example, from \cite[\S 5]{ORS2013oddkh}, we know that $\widetilde{Kh}_{\mathrm{odd}}(9_{42};\Q)$ is thin, while $\widetilde{Kh}_{\mathrm{odd}}(9_{42};\Z/2)$ is not thin. See also Shumakovitch \cite[\S 4.2]{Shu11oddkh} for $3$-torsion in the reduced odd Khovanov homology of $9_{46}$ and $10_{140}$, which are $(3,3,-3)$- and $(3,4,-3)$-pretzel knots. Note that they are both thin when $3$ is invertible. Shumakovitch also mentioned that $(n, n, -n)$- and $(n, n + 1, -n)$-pretzel links have torsion of order $n$ for positive integer $n\le 6$.
\erem
\brem

We conjecture that the spectral sequence in Theorem \ref{thm: main kh} is also independent of the planar diagram $\cD$, similar to the results by Baldwin \cite{Baldwin2011ss} and Bloom \cite[\S 9.2]{Bloom2011link} for Heegaard Floer and monopole Floer homologies over $\Z/2$, respectively. If this is true, then we would obtain that these spectral sequences only depend on the mutation equivalence class of $L$ as in \cite[Theorem 1.4]{Bloom2011link}. Similar to the work of Baldwin--Hedden--Lobb \cite{BHL2019ss}, one can further ask the functoriality of the spectral sequence under link cobordisms.
\erem
From Remark \ref{rem: solve conj} and the similar spectral sequence for framed instanton homology $I^\sharp$ by Scaduto \cite[Theorem 1.1]{scaduto2015instantons}, we have the following corollary, which includes the knots $9_{42}$, $9_{46}$, and $10_{140}$ as motivating examples.
\bcor
Given a link $L\subset S^3$ and a coefficient field $\bF$, if $\widetilde{Kh}_{\mathrm{odd}}(L;\bF)$ is totally supported in even $\delta$-gradings or odd $\delta$-gradings, then the spectral sequence in Theorem \ref{thm: main kh} collapses and\[\widetilde{Kh}_{\mathrm{odd}}(L;\bF)\cong \widehat{HF}(-\Sigma_2(L);\bF)\cong \widetilde{HM}_\bu(-\Sigma_2(L);\bF)\cong I^\sharp(-\Sigma_2(L);\bF).\]
\ecor
\begin{org}
In \S \ref{sec: Local system}, we construct the local system $\Ga_\eta$ over $\Z[u, u^{-1}]$ for a formal variable $u$ (distinct from the $U$-action) and prove some properties when $u=-1$. We will only use the case $u=-1$ in later sections and use the local system sets $\eta$ and $\nu$ in $\Ga_{\eta}$ and $\Ga_{\nu}$ to denote this specific local system. We also use the terms ``local system'' and ``local coefficients'' interchangeably.

In \S \ref{sec: Triangle over local system}, we adapt Freeman's computation \cite[\S 7]{Freeman2021triangle} to obtain the monopole surgery exact triangle over our local system. For the sake of completeness, we add many details to Freeman's original proof.

In \S \ref{sec: Triangle over integer coefficients}, we use a homological trick to transfer the triangle over the local system to one without local system on all three Floer homology groups (Theorem \ref{thm: main triangle}). We also show that the cobordism maps over the local system coincide with the ones based on ad hoc sign arrangements used by Lin--Ruberman--Saveliev.

In \S \ref{sec: Spectral sequence}, we follow the work of Bloom \cite{Bloom2011link} and Scaduto \cite{scaduto2015instantons} to iterate the construction for a link and prove Theorems \ref{thm: main ss} and \ref{thm: main kh}.

In \S \ref{sec: Further direction on sutured monopole theory}, we sketch how our local system with $u=-1$ interacts with the sutured monopole theory in \cite{kronheimer2010knots}, indicating some further directions.
\end{org}
\begin{ack}
The first author thanks Sungkyung Kang for helpful discussions. The second author thanks Mohammed Abouzaid for helpful discussions that motivated this work. The authors also thank Ciprian Manolescu for comments on the draft of the paper.

\end{ack}

\section{Local system}\label{sec: Local system}
In this section, we construct a local system over $\Z[u,u^{-1}]$ for monopole Floer homology. This local system differs from those in \cite[\S 22.6 Examples, the end of \S 23.3]{kronheimer2007monopoles} and \cite[\S 2.2]{kronheimer2010knots} but still follows the framework of \cite[Theorem 23.3.4]{kronheimer2007monopoles}. The construction is motivated by the local system in singular instanton theory \cite{kronheimer2011knot,KM2019deformation,KM2021Barnatan,KM2025relation}. 

We will use the notation from \cite{kronheimer2007monopoles} freely. We write $i=\sqrt{-1}\in\C$ in the formula about the integral of the curvature. Throughout this section, let $HM$ denote any fixed version of\[\widehat{HM}_\bu, ~\widecheck{HM}_\bu,~ \overline{HM}_\bu,\text{ and } \widetilde{HM}_\bu\]and let $CM$ be the corresponding monopole Floer chain complex.

\subsection{Kronheimer--Mrowka's construction}\label{subsec: Kronheimer--Mrowka's construction}
We first recall the local system in \cite[\S 22.6 and \S 23.3]{kronheimer2007monopoles} and \cite[\S 2.2]{kronheimer2010knots}, denoted by $\Ga'_{\eta}$. 

Suppose $Y$ is a (smooth, oriented, connected) closed $3$-manifold and $\fs$ is a spin$^c$ structure on $Y$. Suppose $\cR$ is a commutative ring with identity and an exponential map $\exp:\R\to \cR^\times$ and write $t=\exp(1)\in\cR$. Given a smooth $1$-cycle $\eta$ in $Y$ with real coefficients, we define the local system $\Ga'_\eta$ on the blown-up monopole configuration space $\cB^\sigma(Y,\fs)$ as follows. The fiber on $\bfB=[B, r,\psi] \in \cB^\sigma(Y,\fs)$ is $\cR$, where $B$ is a spin$^c$ connection, $r\in [0,\infty)$, and $\psi$ is a unit section of the spinor bundle. For a homotopy class of paths $z:[0,1]\to \cB^\sigma(Y,\fs)$, we obtain a gauge-equivalence class of $4$-dimensional connection $[A_z]$ on $[0,1]\times Y$ and assign the map\[\Ga'_\eta(z):\Ga'_\eta (z(0))\to \Ga'_\eta (z(1))\]by the multiplication of $t^{r(z)}=\exp(r_\eta(z))$ for \begin{equation}\label{eq: r(z)}
    r_\eta(z)=\frac{i}{2\pi}\int_{[0,1]\times\eta}F_{A_z^t}.
\end{equation}Here $F_{A_z^t}$ is the curvature of the connection $A_z^t$ in the associated line bundle and $r_\eta(z)$ does not depend on the choice of the representative $A_z$. This local system $\Ga'_\eta$ induces a construction of the monopole Floer chain complex \cite[\S 22.6]{kronheimer2010knots}
\[
CM(Y,\fs;\Gamma'_\eta) = \bigoplus_{\bfB \in \mathfrak{C}} \Z \Lambda_\eta(\bfB) \otimes \Gamma (\bfB),
\]where $\mathfrak{C}$ is the set of generators. The differential is obtained by
\begin{equation}\label{equ:chain-level-differential}
\partial = \sum_{\bfB_0,\bfB_1 \in \mathfrak{C} }\sum_z\sum_{[\gamma ]\in M_z( \bfB_0, \bfB_1)} \epsilon[\gamma] \otimes \Gamma_\eta(z),
\end{equation}where $M_z$ is the corresponding monopole Moduli space. Note that the chain complex of the tilde version $\widetilde{CM}_\bu$ was defined in \cite[\S 8]{Bloom2011link} as the mapping cone of the $U$-map in $\widecheck{CM}_\bu$. The homology is denoted by $HM(Y,\fs;\Ga'_\eta)$. We can extend the local system to \[\cB^\sigma(Y)=\coprod_{\fs\in\spinc(Y)}\cB^\sigma(Y, \fs)\] and obtain\[HM(Y;\Ga'_{\eta})=\bigoplus_{\fs\in\spinc(Y)}HM(Y,\fs;\Ga'_\eta).\]

Let $W:Y_0\to Y_1$ be a cobordism between two closed $3$-manifolds and let $\nu$ be a smooth relative $2$-chain $\nu$ with $\partial \nu=\eta_1-\eta_0$ for $\eta_i$ on $Y_i$ with $i=0,1$. We will call $(W,\nu)$ a cobordism from $(Y_0,\eta_0)$ to $(Y_1,\eta_1)$. Then $\nu$ induces a $W$-morphism
\[
\Gamma'_\nu: \Gamma'_{\eta_0} \to \Gamma'_{\eta_1}
\]
as follows. Let $\gamma = (A,s,\phi)$ be a representative of $[\gamma] \in \cB^\sigma(W)$ that connects $\bfB_0=[B_0,r_0,\psi_0] \in \cB^\sigma(Y_0)$ to $\bfB_1=[B_1,r_1,\psi_1] \in \cB^\sigma(Y_1)$. Then define
\[
\Gamma'_\nu([\gamma]): \Gamma'_{\eta_0}(\bfB_0) \to \Gamma'_{\eta_1}(\bfB_1)
\]
by the multiplication of
\begin{equation}\label{eq: W-morphism}
    \exp\big(\frac{i}{2\pi}\int_\nu F_{A^t}\big).
\end{equation}
This $W$-morphism induces a chain map\[CM(W;\Ga'_\nu):CM(Y_0;\Ga'_{\eta_0})\to CM(Y_1;\Ga'_{\eta_1})\]that depends on a choice of homology orientation of $W$, which then induces the monopole cobordism map \[HM(W;\Ga'_\nu):HM(Y_0;\Ga'_{\eta_0})\to HM(Y_1;\Ga'_{\eta_1}).\]

By Stokes' theorem, we know $\Ga'_\nu=\Ga'_{\nu+\partial \theta}$ for any smooth $3$-chain $\theta$. Moreover, if two $1$-cycles $\eta$ and $\eta'$ on $Y$ satisfy $\eta'-\eta=\partial \sigma$ for a smooth $2$-chain $\sigma$ on $Y$, then $HM(Y\times I;\Ga'_\sigma)$ induces an isomorphism (up to sign by the choice of homology orientation) from $HM(Y;\Ga'_{\eta})$ to $HM(Y;\Ga'_{\eta'})$. The difference of the isomorphisms for two choices $\sigma$ and $\sigma'$ is the multiplication by\[t^{\langle c_1(\fs),[\sigma-\sigma']\rangle}\]on the summand corresponding to $\fs\in\spinc(Y\times I)$. In particular, the isomorphism class of $HM(Y;\Ga'_{\eta})$ only depends on the homology class of $\eta$.

\subsection{Our construction}\label{subsec: Our construction}

We adopt the notation in \S \ref{subsec: Kronheimer--Mrowka's construction}, but take all smooth cycles and chains $\eta,\nu,\theta,\sigma$ over $\Z$ rather than $\R$. Hence we may regard them as unions of submanifolds (that may have intersections). Our local system $\Ga_\eta$ on $\cB^\sigma(Y,\fs)$ is obtained by a function\begin{equation*}\label{eq: function f}
    f:\cB^\sigma(Y,\fs)\to \R/\Z=S^1
\end{equation*}and pull back the standard local system on $S^1$ with fiber $u^{\lambda}R$ for $R=\Z[u^{\pm 1}]$ at $\lambda\in\R/\Z$.

The function $f$ is obtained by the holonomy of the connection around $\eta$, mentioned in \cite[\S 6.1]{Freeman2021triangle}. More precisely, let $\tau_{\det}(\eta)$ be a unit section of the determinant line bundle of the spin$^c$ bundle over $\eta$. For $\bfB=[B,r,\psi]\in\cB(Y)$, we define\[f(\bfB)=f_{\tau_{\det}(\eta)}(\bfB)=\frac{1}{2}\frac{i}{2\pi}\int_{\tau_{\det}(\eta)}B^t,\]where $B^t$ is the induced connection in the determinant line bundle, and the integral on $\tau_{\det}(\eta)$ denotes the integration of the pull-back of the connection. We write the factor $1/2$ separately because it is the crucial ingredient of our construction. See Remark \ref{rem: u=t^-2} below. 

The function $f$ is well-defined, because the spin$^c$ connections $B$ and $g(B)$ in two different representatives of $\bfB$ for a gauge action $g: Y\to S^1$ differ by a multiple of $g^{-1}dg\ot \id_S$ for the spinor bundle $S$ by \cite[Equation (4.5)]{kronheimer2007monopoles}. Hence $B^t$ and $g(B)^t$ differ by $2g^{-1}dg$ because $S$ is a complex bundle of rank two. Thus, we have\begin{equation}\label{eq: 2Z}
    \frac{i}{2\pi}\int_{\tau_{\det}(\eta)}(B^t-g(B)^t)=\frac{i}{2\pi}\int_{\tau_{\det}(\eta)}4\pi i[g]=-2\langle [g],[\eta]\rangle\in 2\Z.
\end{equation}It is necessary to use $\tau_{\det}(\eta)$ instead of $\eta$ itself because the integrals for different choices of $\tau_{\det}(\eta)$ differ by an integer. Hence (only) the parity of $\tau_{\det}(\eta)$ matters.

From the construction of the local system on $S^1$, the fiber of $\Ga_\eta$ at $\bfB$ is \[u^{f(\bfB)}R\]and the homotopy class of paths $z:[0,1]\to \cB^\sigma(Y,\fs)$ corresponds to the multiplication by \[u^{\wti{f}(z(1))-\wti{f}(z(0))}\]for any lift $\wti{f}:\cB^\sigma(Y,\fs)\to \R$ of $f$. By Stokes' theorem and the fact that $F^t=dB^t$ for the determinant line bundle, we obtain that\[\wti{f}(z(1))-\wti{f}(z(0))=\frac{1}{2}\frac{i}{2\pi}\int_{[0,1]\times\tau_{\det}(\eta)}F_{A_z}^t.\]Note that the right-hand-side does not depend on the choice of $\tau_{\det}(\eta)$ and is equal to $r_\eta(z)/2$ for $r_\eta(z)$ from \eqref{eq: r(z)}.

\brem\label{rem: u=t^-2}
Roughly, compared to Kronheimer--Mrowka's construction, we have $u=t^2$, where $2$ is from the factor $1/2$ (see \cite[\S 2.2 Remarks]{KM2021Barnatan}), though our choice of the fiber is different from theirs. In this spirit, Freeman \cite{Freeman2021triangle} considered the case $t=i$ and only obtained a triangle over $\Z[i]$, while we can set $u=-1$ and obtain a triangle over $\Z$.
\erem

The construction of the $W$-morphism is more subtle, as the definition in \eqref{eq: W-morphism} with factor $1/2$ does not work directly and we need some even result from the spin$^c$ condition similar to \eqref{eq: 2Z}. We first introduce the following definitions.

\bdefn\label{defn: framing}
Given a spin$^c$ structure $\fs$, we write $P_{\det}\fs$ for the corresponding determinant (complex) line bundle. Given an integral smooth $1$-cycle $\eta$ in $Y$, a \emph{framing} $\tau^{\fs}(\eta)$ of $\eta$ for a spin$^c$ structure $\fs$ on $Y$ is a tuple
    \[
    ( \tau_{\det}^\fs(\eta), \tau_N^\fs(\eta), \tau_T^\fs(\eta))
    \]
    where 
    \begin{enumerate}
        \item $\tau_{\det}^\fs(\eta)$ is a unit section of the determinant line bundle $P_{\det}\fs$ over $\eta$,
        \item $\tau_N^\fs(\eta)$ is a framing of the normal bundle of $\eta$ in $Y$, and
        \item $\tau_T^\fs(\eta)$ is a framing of the tangent bundle of $\eta$ as the boundary of $\eta \times [0,\epsilon) \subset Y \times [0,\epsilon)$, such that $(\tau_N^\fs \times \tau_T^\fs)(\eta) := \tau_N^\fs(\eta) \times \tau_T^\fs(\eta)$ is compatible with the orientation of $Y \times [0,\epsilon)$.
    \end{enumerate} 
    We call $\tau^\fs(\eta)$ an \emph{admissible framing} if for a $4$-dimensional spin$^c$ filling $(X,\fs_X,\nu)$ of $(Y,\fs,\eta)$ (whose existence is from \cite[Proposition 28.1.2]{kronheimer2007monopoles}), the bundle \[(P_{\det}\fs_X \times P_{SO(4)}X )|_\nu\] for the frame bundle $P_{SO(4)}X$ of $X$ has vanishing second relative Stiefel--Whitney class corresponding to $\tau(\eta)$, namely
\[
w_2((P_{\det}\fs_X \times P_{SO(4)}X) |_\nu, \tau^\fs(\eta) ) = 0.
\]
For simplicity, we will omit the spin$^c$ structure $\fs$ for framings and write \[\tau(\eta)=(\tau_{\det}(\eta),\tau_N(\eta),\tau_T(\eta))\]for the union of the framings for all spin$^c$ structures on $Y$.
\edefn
\brem\label{rem: admissible 2eta}
For any framing $\tau(\eta)$ of $\eta$, the admissible condition always holds for $\tau(2\eta)$ because $w_2$ is defined mod $2$.
\erem
The following lemma shows that the admissible condition does not depend on the choice of the $4$-dimensional spin$^c$ filling and hence it is a condition of the $3$-dimensional data $(Y,\fs,\eta)$. 
\begin{lemma}
    Suppose the tuple $(Y,\fs,\eta)$ admits two $4$-dimensional spin$^c$ fillings $(X_0,\fs_{X_0},\nu_0)$ and $(X_1,\fs_{X_1},\nu_1)$ such that a spin$^c$-structure $\fs_X$ on $X=X_0\cup_{Y} X_1$ whose determinant line bundle restricts to $P_{\det}\fs_{X_0}$ and $P_{\det}\fs_{X_1}$. Then a framing $\tau^\fs(\eta)$ is admissible for one side if and only if it is admissible for the other side.
\end{lemma}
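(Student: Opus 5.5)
Glue the two relative classes into one absolute class on the closed surface $\Sigma=\nu_0\cup_\eta(-\nu_1)\subset X$, and express the difference of the two admissibility conditions as a single absolute $w_2$ that vanishes by the spin$^c$ hypothesis. Precisely, we aim to show
\[
w_2\big((P_{\det}\fs_{X_0}\times P_{SO(4)}X_0)|_{\nu_0},\tau^\fs(\eta)\big)+w_2\big((P_{\det}\fs_{X_1}\times P_{SO(4)}X_1)|_{\nu_1},\tau^\fs(\eta)\big)
=\big\langle w_2(P_{\det}\fs_X\times P_{SO(4)}X),[\Sigma]\big\rangle \pmod 2,
\]
and then that the right-hand side vanishes. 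Here $X_1$ is reversed so that $X$ is oriented, and $\Sigma$ is regarded as an integral (possibly immersed) $2$-cycle, with each relative $w_2$ viewed as a number in $\Z/2$ via the pairing with the relative fundamental class of $\nu_i$.

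\textbf{Step 1 (additivity).} Over a neighborhood of $\eta$ in $Y$, the bundle $P_{\det}\fs_X\times P_{SO(4)}X$ restricted to $\Sigma$ is trivialized by the framing $\tau(\eta)$. The determinant part is trivialized by $\tau_{\det}(\eta)$. The $SO(4)$ part is trivialized by $\tau_N\times\tau_T$ together with the outward normal to $Y$ in $X_0$, which is the inward normal in $X_1$. The relative obstruction class is additive under gluing along a common trivialized boundary, which gives the displayed formula. I would take the additivity itself from obstruction theory, since it is the standard property of relative Stiefel--Whitney classes.

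\textbf{Step 2 (orientation reversal).} This is the step I expect to be the main obstacle. On $X_1$ with reversed orientation, $\tau_T$ must be compared with the collar direction $Y\times[0,\epsilon)$ on the other side. This changes the $SO(4)$ trivialization by a reflection composed with a reflection: flip the collar coordinate and flip one tangent vector so that the frame stays oriented. I need to check that this change contributes no extra mod $2$ twist along $\eta$. I would first verify this via the fact that the loop in $SO(4)$ induced by this fixed change of frame is constant along $\eta$, hence null-homotopic. Any genuine twist would come from $\pi_1(SO(4))=\Z/2$, and since the frame change does not vary along $\eta$, no such twist appears. Also, because $w_2$ is mod $2$, the sign change $\nu_1\mapsto-\nu_1$ is irrelevant (compare Remark~\ref{rem: admissible 2eta}).

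\textbf{Step 3 (vanishing).} Since $\fs_X$ is a spin$^c$ structure on $X$, we have $w_2(TX)=c_1(P_{\det}\fs_X)\bmod 2$. The bundle $P_{\det}\fs_X\times P_{SO(4)}X$ corresponds to the $U(1)\times SO(4)$ bundle $L\oplus TX$ with $L=P_{\det}\fs_X$. Its $w_2$ equals
\[
w_2(L\oplus TX)=c_1(L)+w_2(TX)\equiv 0 \pmod 2,
\]
and pairing with $[\Sigma]$ gives $0$. The hypothesis that $P_{\det}\fs_X$ restricts to $P_{\det}\fs_{X_i}$ ensures that the $w_2$ on each piece is the one appearing in the definition of admissibility. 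The map $\spinc(X)\to\spinc(X_i)$ need only match on determinant bundles, which is all the $w_2$ sees.

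\textbf{Conclusion.} The two relative classes are equal mod $2$, so $\tau^\fs(\eta)$ is admissible for $X_0$ if and only if it is admissible for $X_1$.
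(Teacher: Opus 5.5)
Your proposal is correct and takes essentially the same route as the paper. The paper also identifies the sum of the two relative obstruction counts on $\nu_0$ and $\nu_1$ with the absolute class $w_2\big((P_{\det}\fs_X\times P_{SO(4)}X)|_{\nu}\big)$ of the glued cycle, and then shows this class vanishes by the spin$^c$ condition. Your Step 2 (the constant change of frame across the collar does not change the obstruction) and your explicit Whitney-sum computation $c_1(L)+w_2(TX)\equiv 0$ in Step 3 spell out details that the paper leaves implicit.
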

\begin{proof}

This phenomenon comes from the definition of the (relative) Stiefel--Whitney class: The obstructions of recursive extensions of the framing (over a subset, in our case is $\eta$) to higher dimensional cells. Hence
\begin{align*}
&w_2((P_{\det}\fs_{X_0} \times P_{SO(4)}X_0) |_{\nu_0}, \tau^\fs(\eta) ) + w_2((P_{\det}\fs_{X_1} \times P_{SO(4)}X_1) |_{\nu_1}, \tau^\fs(\eta) ) \\
&\equiv \#\{\text{obstructions on $\nu_0^{(2)}$ when extending $\tau^\fs(\eta)$} \} + \\
&\;\;\;\text{    } \#\{\text{obstructions on $\nu_1^{(2)}$ when extending $\tau^\fs(\eta)$} \}
\\
&\equiv w_2((P_{\det}\fs_X \times P_{SO(4)}X) |_\nu )\pmod 2.
\end{align*}
The spin$^c$ condition implies the vanishing of $w_2((P_{\det}\fs_X \times P_{SO(4)}X) |_\nu )$. Hence the first two terms in the above equation have the same parity. If they are both even, then the framing $\tau^\fs(\eta)$ is admissible for both sides. If not, then the framing is not admissible for both sides.
\end{proof}
With the notion of admissible framings on $1$-cycles, we can now define the cobordism map of local system between $1$-cycles equipped with admissible framings:
\bdefn\label{def:cob}
    Suppose $(W,\nu):(Y_0,\eta_0)\to (Y_1,\eta_1)$ is a cobordism, that is\begin{itemize}
        \item $W:Y_0\to Y_1$ is a cobordism between closed $3$-manifolds.
        \item $\eta_i\subset Y_i$ for $i=0,1$ is an integral (rather than rational) smooth  $1$-cycle.
        \item $\nu\subset W$ is an integral smooth relative $2$-chain such that $\partial \nu=\eta_1-\eta_0$.
    \end{itemize}
    Let $\tau(\eta_i)$ be an admissible framing of $\eta_i$. We write $\tau( \eta_1-\eta_0)$ to denote $\tau(-\eta_0)\cup \tau(\eta_1)$ (only) in the notation of relative characteristic classes for short. Note that $\tau(-\eta_0)$ and $\tau(\eta_1)$ are admissible implies $\tau( \eta_1-\eta_0)$ is admissible with respect to $(W,\nu)$, but the converse statement is not true in general.
\edefn
We use $\nu$ to define a $W$-morphism
\[
\Gamma_\nu: \Gamma^{\tau(\eta_0)}_{\eta_0} \to \Gamma^{\tau(\eta_1)}_{\eta_1}
\]
by the following. Let $\gamma=(A,s,\phi)$ be a representative of $[\gamma] \in \cB^\sigma(W)$ that connects $\bfB_0=[B_0,r_0,\psi_0] \in \cB^\sigma(Y_0)$ to $\bfB_1=[B_1,r_1,\psi_1] \in \cB^\sigma(Y_1)$. Let $\exp_u(\lambda)$ denote $u^{\lambda}$ and let $k\in\Z$. Then define
\begin{equation}\label{eq: cob map}
    \begin{aligned}
\Gamma_\nu([\gamma]): \Gamma_{\eta_0}^{\tau(\eta_0)} (\bfB_0) \to& \Gamma_{\eta_1}^{\tau(\eta_1)} (\bfB_1)\\
  \exp_u\big( \frac{1}{2} \frac{i}{2\pi}\int_{\tau_{\det}(\eta_0)} B_0^t \big)R \to& \exp_u\big(\frac{1}{2} \frac{i}{2\pi}\int_{\tau_{\det}(\eta_1)} B_1^t \big)R  \\
  \exp_u\big(\frac{1}{2} \frac{i}{2\pi}\int_{\tau_{\det}(\eta_0)} B_0^t + k\big) \mapsto &\exp_u\big(\frac{1}{2} \big(\frac{i}{2\pi}\int_{\tau_{\det}(\eta_0)} B_0^t + k \\&+\frac{i}{2\pi}\int_\nu F_{A^t}  +\topo(W,  \nu, \tau( \eta_1-\eta_0))\big)  \big)
\end{aligned}
\end{equation}
Here $\topo$ is the topological term:
\[
\topo(W, \nu, \tau( \eta_1-\eta_0)) = \langle e(N_\nu, \tau_N(\eta_1-\eta_0)), [\nu]\rangle +  \langle e(T_\nu, \tau_T( \eta_1-\eta_0)), [\nu]\rangle
\]
 where 
    \begin{itemize}
        \item $e(N_\nu, \tau_N(\eta_1-\eta_0))\in H^2(\nu, \partial \nu) $ is the relative Euler class of the normal bundle of $\nu$ mod $ \eta_1-\eta_0$ corresponding to $\tau_N( \eta_1-\eta_0)$.
        \item $e(T_\nu, \tau_T( \eta_1-\eta_0))\in H^2(\nu, \partial \nu)  $ is the relative Euler class of the tangent bundle of $\nu$ mod $ \eta_1-\eta_0$ corresponding to $\tau_T( \eta_1-\eta_0)$.
    \end{itemize}

\begin{example}\label{exmp: closed}
    If $\nu$ is closed, the integral of the curvature is the evaluation of the first Chern class
    \[\frac{i}{2\pi}\int_{\nu}F_{A^t}=\langle c_1(\fs),[\nu]\rangle,\]and
    the topological term is given by the self-intersection number and the Euler characteristic
    \[
\topo(W, \nu) = \nu\cdot \nu +  \chi(\nu).
\]If we take $-\nu$, then the curvature term has a minus sign but the topological term keeps the same. Hence if we take the product cobordism $W=Y\times I$ and a closed $\nu$, then \begin{equation}\label{eq: non id}
    HM(W;\Ga_{\nu})\circ HM(W;\Ga_{-\nu})\neq \id.
\end{equation}We will come back to this case in Remark \ref{rem: non id}.
\end{example}

To show that the above definition of the $W$-morphism between $1$-cycles equipped with admissible framings is well-defined, we need the following lemma.
\begin{lemma}\label{lem: even}Suppose $P_{SO(4)}$ is the frame bundle of the cobordism $W$ and $P_{\det}$ is the determinant line bundle of $\fs_W\in \spinc(W)$. Under the assumption that \[w_2(P_{\det} \times P_{SO(4)} |_\nu, \tau( \eta_1-\eta_0)) = 0,\]the sum
    \[
   \frac{i}{2\pi}\big(\int_{\tau_{\det}(\eta_0)} B_0^t +\int_\nu F_{A^t}  - \int_{\tau_{\det}(\eta_1)} B_1^t\big) +\topo(W,  \nu, \tau( \eta_1-\eta_0)) 
    \]
    is an even integer.
\end{lemma}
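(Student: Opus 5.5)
We reduce the claim to a relative Chern class computation. The plan has two parts: first show that the curvature part is an integer and identify it with a relative first Chern number, then relate that number's parity to the topological term through a relative Wu-type formula.

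\emph{Step 1: the curvature part is a relative Chern number.} Since $F_{A^t}=dA^t$ and $A^t$ restricts to $B_i^t$ on the ends, Stokes' theorem on $\nu$ (with $\partial\nu=\eta_1-\eta_0$) shows that
\[
\frac{i}{2\pi}\Big(\int_{\tau_{\det}(\eta_0)}B_0^t+\int_\nu F_{A^t}-\int_{\tau_{\det}(\eta_1)}B_1^t\Big)
\]
equals the relative Chern number $\langle c_1(P_{\det}|_\nu,\tau_{\det}(\eta_1-\eta_0)),[\nu,\partial\nu]\rangle$. This is the standard fact that curvature plus boundary holonomy, measured against a trivializing section, computes the relative degree. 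Concretely, trivialize $P_{\det}$ over $\nu$ away from finitely many points, compatibly with $\tau_{\det}$ on $\partial\nu$. The integral then localizes to the winding numbers at those points. In particular this quantity is an integer, and changing the representative or gauge changes each boundary term by an even integer, as in \eqref{eq: 2Z}, consistently. The topological term $\topo$ is likewise an integer, namely a relative Euler number.

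\emph{Step 2: reduce the parity to $w_2$.} Mod $2$, relative Euler and Chern classes reduce to relative $w_2$:
\[
c_1(P_{\det},\tau_{\det})\equiv w_2(P_{\det},\tau_{\det}),\qquad e(N_\nu,\tau_N)+e(T_\nu,\tau_T)\equiv w_2(N_\nu\oplus T_\nu,\tau_N\oplus\tau_T).
\]
Here we use that $w_2$ of a Whitney sum of two oriented $2$-plane bundles is $w_2+w_2+w_1w_1=w_2+w_2$. Moreover $N_\nu\oplus T_\nu=TW|_\nu$, and under this identification the framing $\tau_N\times\tau_T$ agrees with the framing of $TW$ along $\partial\nu$ determined by the orientation convention in Definition~\ref{defn: framing}. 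So the whole sum is congruent mod $2$ to
\[
w_2(P_{\det}|_\nu,\tau_{\det})+w_2(P_{SO(4)}|_\nu,\tau_N\times\tau_T),
\]
which by additivity of relative $w_2$ for products of bundles with product framings equals $w_2(P_{\det}\times P_{SO(4)}|_\nu,\tau(\eta_1-\eta_0))$. This vanishes by hypothesis, so the sum is even.

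\emph{Main obstacle.} The delicate point is Step 2 in the setting where $\nu$ is only an integral smooth $2$-chain, a union of immersed surfaces that may meet, rather than an embedded surface. There $N_\nu$, $T_\nu$ and the relative classes must be interpreted piecewise, and intersection points contribute to $\topo$. I would first perturb $\nu$ to a generic immersed surface with transverse double points. I would then either resolve the double points, checking that each resolution changes the curvature term by $0$ and $\topo$ by an even amount, or use that $\nu\cdot\nu$ counts double points twice. One must also confirm that the product framing $\tau_N\times\tau_T$ really matches the framing of $TW|_{\partial\nu}$ in which $w_2$ is computed. This is a sign and orientation bookkeeping issue that is irrelevant mod $2$, which is why the statement is only a parity claim.
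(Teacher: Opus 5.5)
Your proposal is correct and follows essentially the same route as the paper. The paper likewise identifies the curvature-plus-boundary-holonomy term with the relative Euler number of $P_{\det}|_\nu$, via Stokes' theorem on the complement of small disks around the zeros of a generic section extending $\tau_{\det}$, and then matches its parity with $\topo$ using $e\equiv w_2 \pmod 2$, the relative Whitney sum formula (with relative $w_1$ vanishing by orientability), and the splitting $TW|_\nu=N_\nu\oplus T_\nu$. Your remarks on immersed chains and double points go beyond the paper, which implicitly treats $\nu$ as an embedded surface.
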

\begin{proof}
    The proof is divided into two steps. We first prove that 
    \[
    \frac{i}{2\pi}\big(\int_{\tau_{\det}(\eta_0)} B_0^t +\int_\nu F_{A^t} - \int_{\tau_{\det}(\eta_1)} B_1^t \big)
    \]
    is equal to the relative Euler number of the determinant line bundle. Then the second step is to show that the sum of this number and the topological term is even.

    The first step is a generalization of \cite[Proposition 1.34]{salamon2000spin}. Choose a generic section $s$ of $P_{\det}|_\nu$ such that on the boundary $s$ corresponds with $\tau_{\det}( \eta_1-\eta_0)$. Then $s$ has only nondegenerate zeros and they are all in the interior of $\nu$. Recall that a zero $x$ of $s$ is called nondegenerate if the map 
    \[
    \nabla s(x): T_x\nu \to P_{{\det},x}
    \]
is an isomorphism and in this case the index $\text{ind}(s,x) = \pm 1$ is determined by whether or not this isomorphism is orientation preserving. Note that the fiber $P_{{\det},x}$ carries a natural orientation as a complex vector space. By definition, the relative Euler number is 
    \[
    \langle e(P_{\det}|_\nu, \tau_{\det}( \eta_1-\eta_0)), [\nu]\rangle = \sum_{s(x)=0} \text{ind}(s,x).
    \]

    Choose a splitting 
    \[
    \nu = \Sigma_1\cup_C \Sigma_2
    \]
    where $\Sigma_2\subset \mathring{\nu}$ is a set of small disks around the zeros of $s$, and orient $ C \subset \partial\Sigma_1$ as the boundary of $\Sigma_1$. Then choose nonzero sections $s_i: \Sigma_i \to P_{\det}$ with $|s_i(x)| = 1$ and $s_1$ corresponds with $\tau_{\det}(\eta_0)$ and $\tau_{\det}(\eta_1)$ on $\partial \nu$. Define $\gamma: C\to S^1$ by 
    \[
    s_2(x) = \gamma(x)s_1(x), \quad  x\in C.
    \]
    
    Recall that $A^t$ is a connection in $P_{\det}$ that restricts to $B^t_0$ and $B^t_1$ on the boundary. Define $\alpha_i \in \Omega^1(\Sigma_i, i \R)$ by $\nabla_{A^t} s_i = \alpha_i s_i$. Then $F_{A^t}|_{\Sigma_i} = d\alpha_i$ and $\alpha_2|_C = \alpha_1|_C + \gamma^{-1}d\gamma$. It follows from Stokes' theorem that 
    \[\begin{aligned}
         \int_\nu F_{A^t} &= \int_{\Sigma_1} d\alpha_1 + \int_{\Sigma_2} d\alpha_2\\
         &= \int_{\eta_1 -\eta_0} \alpha_1 + \int_C (\alpha_1-\alpha_2)\\
         &=\int_{\tau_{\det}(\eta_1)} B_1^t  - \int_{\tau_{\det}(\eta_0)} B_0^t -\int_C\gamma^{-1}d\gamma \\
         &= \int_{\tau_{\det}(\eta_1)} B_1^t  - \int_{\tau_{\det}(\eta_0)} B_0^t - 2\pi i\deg(\gamma)\\
         &= \int_{\tau_{\det}(\eta_1)} B_1^t  - \int_{\tau_{\det}(\eta_0)} B_0^t - 2\pi i\sum_{s(x)=0} \text{ind}(s,x)
    \end{aligned}\]

We already proved that 
\[
\frac{i}{2\pi}\big(\int_{\tau_{\det}(\eta_0)} B_0^t +\int_\nu F_{A^t} - \int_{\tau_{\det}(\eta_1)} B_1^t\big) =    \langle e(P_{\det}|_\nu, \tau_{\det}( \eta_1-\eta_0)), [\nu]\rangle.
\]

Now we start the second step of the proof. In the closed case, the top Stiefel--Whitney class is the top Euler class mod $2$ (see \cite[Proposition 1.4.9]{Kirbycal1999}). In the relative case, one still has similar reason: An obstruction to extend a framing on the boundary to a framing on a $2$-cell, corresponds to a zero of the extension of the section given by the first direction of the boundary framing. So
\[
w_2(P_{\det}|_\nu, \tau_{\det}( \eta_1-\eta_0)) \equiv e(P_{\det}|_\nu, \tau_{\det}( \eta_1-\eta_0))\pmod 2.
\]
The relative Stiefel--Whitney class also holds the Whitney duality (see \cite[Theorem 4.1]{Kervaire57}, where the formula might involve closed Stiefel--Whitney classes, but here we have assumed sufficient framings on the boundary such that all terms are relative classes):
\[\begin{aligned}
    &w_2(P_{\det} \times P_{SO(4)} |_\nu, \tau(\eta_1-\eta_0)) = \\
&w_2(P_{\det}  |_\nu, \tau_{\det}( \eta_1-\eta_0)) + w_2( P_{SO(4)} |_\nu, (\tau_N \times \tau_T)(\eta_1-\eta_0) 
+\\
&w_1(P_{\det}  |_\nu, \tau_{\det}( \eta_1-\eta_0)) \cdot w_1(P_{SO(4)} |_\nu, (\tau_N \times \tau_T)(\eta_1-\eta_0)
\end{aligned}\]
We assume that all bundles are orientable, so the extension of the boundary framing to $1$-cell would not have any obstruction. Hence the relative $w_1$ vanishes. Furthermore, we have
\[\begin{aligned}
    &   w_2(P_{\det} \times P_{SO(4)} |_\nu, \tau(\eta_1-\eta_0))  \\
&=w_2(P_{\det}  |_\nu, \tau_{\det}( \eta_1-\eta_0)) + w_2( P_{SO(4)} |_\nu, (\tau_N \times \tau_T)(\eta_1-\eta_0)
\\
&=w_2(P_{\det}  |_\nu, \tau_{\det}( \eta_1-\eta_0)) +w_2(N_\nu, \tau_N(\eta_1-\eta_0)) +  w_2( T_\nu ,  \tau_T(\eta_1-\eta_0)).
\end{aligned}\]
So the assumption $w_2(P_{\det} \times P_{SO(4)} |_\nu, \tau(\eta_1-\eta_0)) = 0$ implies that 
\[
e(P_{\det}  |_\nu, \tau_{\det}( \eta_1-\eta_0)) \equiv e(N_\nu, \tau_N(\eta_1-\eta_0)) +  e( T_\nu ,  \tau_T(\eta_1-\eta_0))\pmod 2.
\]
As the topological term $\topo(W,\nu, \tau(\eta_1-\eta_0)) $ is the evaluation of the right-hand-side, we complete the second step of the proof.
\end{proof}

The following example is the simplest case of the cobordism map: the product cobordism. It also shows what will happen when the framings of loops are changed.
\begin{example}\label{exmp: product cob}
Consider a product cobordism $(W,\nu) = (Y,\eta)\times I:(Y,\eta)\to (Y,\eta)$. In the construction of the cobordism map, the only solutions are constant paths (namely on each slice of $Y \times I$ the solution is $B_0^t$). Hence the curvature term vanishes and $B_0^t  = B_1^t $.


The topological term depends on the normal and tangent framings on two sides, whose difference can be measured by the zeros on $\nu$ (or equivalently, the relative Euler class of $N\vert _\nu$ or $T\vert _\nu$). The difference between determinant framings on two sides can be measured by the relative Euler class of $P_{\det}\vert _\nu$. Lemma \ref{lem: even} implies that the sum of these two contributions are even and then the cobordism map associated to $(W = Y \times I,\nu)$ for $\tau( \eta_1-\eta_0)$ is $u^m\cdot \id$ for some $m\in\Z$. 

Here we use the canonical choice of homology orientation for product cobordism from \cite[Proposition 8.3]{scaduto2015instantons} described as follows. Let $\al$ be any orientation of $H_1(Y)$ which induces an orientation $\be$ of $H_1(Y\times I)$. Then the canonical homology orientation is\[\mu_Y^{\id}=(-1)^{\frac{b_1(Y)^2+b_1(Y)}{2}}\be\wedge \al.\]Note that $\overline{\mu}_Y^{\id}=\mu_Y^{\id}$ for the automorphism $\mu\mapsto \overline{\mu}$ described at the end of \cite[\S 8.2]{scaduto2015instantons}.

\end{example}
The product cobordism provides canonical isomorphisms for different choices of admissible framings. Recall that $R=\Z[u^{\pm 1}]$. We obtain an object
\[
{HM}(Y; \Gamma_\eta):= \{{HM}(Y; \Gamma_\eta^{\tau(\eta)}),\tau(\eta) \text{ admissible} \} 
\]
in $R\text{-}\mathrm{MOD}/\mathrm{CAN}$ from \cite[Page 453]{kronheimer2007monopoles}, or equivalently a transitive system of $R$-modules (see \cite[Definition 1.1]{Juhasz2012}). There is a functor from $R\text{-}\mathrm{MOD}/\mathrm{CAN}$ to $R\text{-}\mathrm{MOD}$ via the colimit. Hence we obtain the following result.
\bprop[{\cite[Theorem 23.3.4]{kronheimer2007monopoles}}]\label{prop: HM}
The monopole Floer homology with our local system induces a covariant functor
\[\begin{aligned}
HM(-;\Gamma_{-}):\mathrm{COB}\text{-}\mathrm{L} &\to R\text{-}\mathrm{MOD}\\
(Y,\eta) &\mapsto HM(Y; \Gamma_{\eta}) := \colim \{HM(Y; \Gamma_\eta^{\tau(\eta)}),~\tau(\eta)~\text{admissible} \} \\(W,\nu)&\mapsto HM(W;\Ga_\nu)
\end{aligned}\]

Here $\mathrm{COB}\text{-}\mathrm{L}$ consists of objects $(Y, \eta)$ and morphisms $(W, \nu)$ from Definition \ref{def:cob}, where $Y$ and $W$ are connected and possibly empty. We call $\eta,\nu$ \emph{local system sets}.


\eprop
\brem\label{rem: disconnected}
Monopole theory for disconnected closed $3$-manifolds and cobordism maps between them could be defined, but needs extra care on the versions of each component and the orientations of the moduli spaces. For the first issue, from Lin \cite[\S 3]{pin2monopole} and Bloom \cite{Bloom2013morse}, one expects that for a cobordism $W:\bigsqcup_{i=0}^nY_i\to Y_{n+1}$ for connected $3$-manifolds $Y_j$ with $j=1,\dots,n+1$, there are cobordism maps over $\Z/2$\[\widecheck{HM}_\bu(Y_0)\ot \widehat{HM}_\bu(Y_1)\ot\cdots\ot \widehat{HM}_\bu(Y_n)\to \widecheck{HM}_\bu(Y_{n+1}),\]\[\widehat{HM}_\bu(Y_0)\ot \widehat{HM}_\bu(Y_1)\ot\cdots\ot \widehat{HM}_\bu(Y_n)\to \widehat{HM}_\bu(Y_{n+1}).\]For the second issue, Kronheimer--Mrowka \cite[\S 2.5 and \S 2.6]{kronheimer2010knots} worked out the case of irreducible connections and the cobordism maps for non-torsion spin$^c$ structures, for which $\overline{HM}_\bu$ vanishes and $\widecheck{HM}_\bu\cong \widehat{HM}_\bu$, and there is no need to consider the versions. In the proof of surgery exact triangle, when stretching some metric on the cobordism to the infinity, some piece of the cobordism could have disconnected boundary (see \cite[Figure 19]{Bloom2013morse}). However, we always consider the union of all pieces of the cobordism and count solutions of broken trajectories to avoid the cobordism map for multiple boundary ends; for example, see the proof of \cite[Proposition 5.2]{kronheimer2007monopolesandlens} and \cite[Equation (19)]{kronheimer2007monopolesandlens}.
\erem
\subsection{Cobordism maps for non-orientable surfaces}
If the $2$-chain $\nu \subset W$ in the definition of the cobordism map (Definition \ref{def:cob}) is non-orientable with respect to the orientation of $\partial \nu$, we can still define the cobordism map if we add extra data compatible with the orientation of $\partial \nu$. The construction of such cobordism maps is necessary when we use the homological trick, and the essential result we need is in Example \ref{exmp: non-orientable}.
\bdefn\label{def:cob-non-orientable}
    We extend the definition of a cobordism to\[(W,\wti{\nu}=(\nu,\kappa,\tau(\kappa))):(Y_0,\eta_0)\to (Y_1,\eta_1)\]where $\wti{\nu}$ is called an \emph{enhanced local system set} and we have the following\begin{itemize}
        \item $W:Y_0\to Y_1$ is a cobordism between closed $3$-manifolds.
        \item $\eta_i\subset Y_i$ for $i=0,1$ is an integral (rather than rational) smooth $1$-cycle. Note that $\eta_i$ is oriented by definition of $1$-cycle.
        \item $\nu\subset W$ is the union of smooth embedded, possibly non-orientable surfaces and $\kappa \subset \nu\subset W$ is a smooth $1$-cycle, such that $\nu \backslash \kappa$ is a smooth $2$-cycle (in particular, oriented) with $\partial(\nu \backslash \kappa) = \eta_1 -\eta_0 + 2\kappa$.
        \item $\tau(\kappa)$ is a framing of $\kappa$. Due to Remark \ref{rem: admissible 2eta}, when the framing $\tau(\eta_i)$ on $\eta_i$ is admissible, we always have the admissible condition of $\tau(\eta_1 -\eta_0 + 2\kappa)$, i.e.\ 
        \[w_2(P_{\det} \times P_{SO(4)} |_{\nu \backslash \kappa}, \tau( \eta_1-\eta_0+ 2\kappa)) = 0\]for the frame bundle $P_{SO(4)}$ of the cobordism $W$ and the determinant line bundle $P_{\det}$ of $\fs_W\in\spinc(W)$.
        
    \end{itemize}


\edefn
\begin{remark}
If $\nu$ is orientable with respect to the orientation of $\eta_0$ and $\eta_1$, one can just take $\kappa = \emptyset$. Then Definition \ref{def:cob-non-orientable} reduces to Definition \ref{def:cob}. In general, if $\tau_T(\partial \Sigma)$ is the outward normal tangent vector field of $\partial \Sigma$, then the homology class $[\kappa]$ is the Poincar\'{e} dual of the first relative Stiefel--Whitney class $w_1(T\Sigma , \tau_T(\partial \Sigma))$, i.e.\ $\kappa$ measures the non-orientability of $\nu$.
\end{remark}
We use $\wti{\nu}$ to define a $W$-morphism
\[
\Gamma_{\wti{\nu}}: \Gamma^{\tau(\eta_0)}_{\eta_0} \to \Gamma^{\tau(\eta_1)}_{\eta_1}
\]
by the following. Let $\gamma=(A,s,\phi)$ be a representative of $[\gamma] \in \cB^\sigma(W)$ that connects $\bfB_0=[B_0,r_0,\psi_0] \in \cB^\sigma(Y_0)$ to $\bfB_1=[B_1,r_1,\psi_1] \in \cB^\sigma(Y_1)$. Let $\exp_u(\lambda)$ denote $u^{\lambda}$ and let $k\in\Z$. Then define
\begin{equation}\label{eq: cob map 2}
    \begin{aligned}
\Gamma_{\wti{\nu}}([\gamma]): \Gamma_{\eta_0}^{\tau(\eta_0)} (\bfB_0) \to& \Gamma_{\eta_1}^{\tau(\eta_1)} (\bfB_1)\\
  \exp_u\big( \frac{1}{2} \frac{i}{2\pi}\int_{\tau_{\det}(\eta_0)} B_0^t \big)R \to& \exp_u\big(\frac{1}{2} \frac{i}{2\pi}\int_{\tau_{\det}(\eta_1)} B_1^t \big)R  \\
  \exp_u\big(\frac{1}{2} \frac{i}{2\pi}\int_{\tau_{\det}(\eta_0 )} B_0^t + k\big) \mapsto &\exp_u\big(\frac{1}{2} \big(\frac{i}{2\pi}\int_{\tau_{\det}(\eta_0)} B_0^t -\frac{i}{2\pi}\int_{\tau_{\det}( 2\kappa)} \gamma  + k \\&+\frac{i}{2\pi}\int_{\nu \backslash \kappa} F_{A^t}  +\topo(W,  \nu \backslash \kappa, \tau( \eta_1-\eta_0 + 2\kappa))\big)  \big)
\end{aligned}\end{equation}
Here $\topo$ is the topological term:
\[
\topo(W, \nu \backslash \kappa, \tau( \eta_1-\eta_0+ 2\kappa)) = \langle e(N_{\nu \backslash \kappa}, \tau_N(\eta_1-\eta_0+ 2\kappa)), [\nu]\rangle +  \langle e(T_{\nu \backslash \kappa}, \tau_T( \eta_1-\eta_0+ 2\kappa)), [\nu]\rangle
\]

To show that the above definition is well-defined, we need the following lemma. The statement and the proof are similar to those of Lemma \ref{lem: even}.
\begin{lemma}\label{lem: even2}Suppose $P_{SO(4)}$ is the frame bundle of the cobordism $W$ and $P_{\det}$ is the determinant line bundle of $\fs_W\in \spinc(W)$. Under the assumption that \[w_2(P_{\det} \times P_{SO(4)} |_{\nu\backslash\kappa}, \tau( \eta_1-\eta_0+2\kappa)) = 0,\]the sum
    \[
   \frac{i}{2\pi}\big(\int_{\tau_{\det}(\eta_0)} B_0^t- \int_{\tau_{\det}(2\kappa)} \gamma +\int_{\nu \backslash \kappa} F_{A^t}  - \int_{\tau_{\det}(\eta_1)} B_1^t\big) +\topo(W,  {\nu \backslash \kappa}, \tau( \eta_1-\eta_0 + 2\kappa)) 
    \]
    is an even integer.
\end{lemma}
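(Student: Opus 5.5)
The plan is to reduce Lemma \ref{lem: even2} to the argument of Lemma \ref{lem: even}, applied to the oriented surface $\nu\backslash\kappa$. Here $\nu\backslash\kappa$ means $\nu$ cut open along $\kappa$. It is an honest oriented smooth $2$-chain with
\[
\partial(\nu\backslash\kappa)=\eta_1-\eta_0+2\kappa,
\]
so two copies of $\kappa$ appear in its boundary. I read the term $\int_{\tau_{\det}(2\kappa)}\gamma$ as the integral of the connection $A^t$ over the framed boundary curve $\tau_{\det}(2\kappa)$. This is the same kind of boundary term that appears for $\eta_0$ and $\eta_1$.

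With that reading, the expression in the lemma is exactly the expression of Lemma \ref{lem: even}, but for the relative cycle $\nu\backslash\kappa$ whose boundary is $\eta_1-\eta_0+2\kappa$, equipped with the framing $\tau(\eta_1-\eta_0+2\kappa)=\tau(-\eta_0)\cup\tau(\eta_1)\cup\tau(2\kappa)$. The one difference is a sign convention: $\kappa$ enters like $-\eta_1$ with respect to the outgoing orientation. So the boundary term for the component $2\kappa$ appears with a minus sign, exactly as $\int_{\tau_{\det}(\eta_1)}B_1^t$ does.

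\textbf{Step 1.} Repeat the Stokes argument of the first step of Lemma \ref{lem: even} on $\nu\backslash\kappa$. Choose a generic section $s$ of $P_{\det}|_{\nu\backslash\kappa}$ that agrees with the given unit framings on all boundary components. Split $\nu\backslash\kappa=\Sigma_1\cup_C\Sigma_2$, where $\Sigma_2$ is a union of small disks around the zeros of $s$. Then compare the connection forms $\alpha_1$ and $\alpha_2$ along $C$. This shows that the curvature integral plus the signed boundary integrals (including the $2\kappa$ term) equals $-2\pi i$ times the relative Euler number
\[
\langle e(P_{\det}|_{\nu\backslash\kappa},\tau_{\det}(\eta_1-\eta_0+2\kappa)),[\nu\backslash\kappa]\rangle.
\]
Consequently, the analytic part of the sum is an integer, namely that relative Euler number.

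\textbf{Step 2.} Reduce mod $2$ exactly as in the second step of Lemma \ref{lem: even}. The relative Euler class agrees with the relative $w_2$ modulo $2$. Whitney duality for relative Stiefel--Whitney classes, with the relative $w_1$ terms vanishing because all the bundles over the oriented surface $\nu\backslash\kappa$ are oriented and framed on the boundary, splits
\[
w_2(P_{\det}\times P_{SO(4)}|_{\nu\backslash\kappa},\tau(\eta_1-\eta_0+2\kappa))
\]
into the $P_{\det}$ part plus the $N$ part plus the $T$ part. The hypothesis says this total vanishes. Therefore the Euler number of $P_{\det}$ is congruent mod $2$ to $\topo(W,\nu\backslash\kappa,\tau(\eta_1-\eta_0+2\kappa))$, and the whole sum is even.

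\textbf{Main obstacle.} I expect the main difficulty to be the boundary bookkeeping along $\kappa$. The cut surface has two boundary copies of $\kappa$, and one has to check that their orientations and framings are both governed by $\tau(\kappa)$ in a consistent way. In particular, the framing $\tau_N$ on $\partial(\nu\backslash\kappa)$ near $\kappa$ must make sense as a framing of the normal bundle of the cut surface. My first approach would be to pass to a collar of $\kappa$ in $\nu$, which is an annulus or a Möbius band. There I would verify by a direct local model that the two boundary copies carry the pulled-back framing $\tau(2\kappa)$. Once that is settled, everything else is a verbatim repetition of Lemma \ref{lem: even}, with Remark \ref{rem: admissible 2eta} used only to guarantee that the hypothesis is consistent.
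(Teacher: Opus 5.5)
Your proposal is correct and follows the paper's route. The paper gives no separate argument: it only says the proof is similar to that of Lemma \ref{lem: even}, which is exactly your Stokes/relative-Euler-number step followed by the mod-$2$ Whitney-sum step on the oriented cut surface $\nu\backslash\kappa$ with boundary $\eta_1-\eta_0+2\kappa$. One small wording slip: $2\kappa$ appears in $\partial(\nu\backslash\kappa)$ with the same sign as $\eta_1$, not ``like $-\eta_1$'', and that is precisely why its boundary integral carries the minus sign, as you correctly concluded.
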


\begin{example}\label{exmp: non-orientable}
    Suppose $W=Y\times I:Y_0\to Y_1$ is the product cobordism. Let $\eta_0 = 2\eta+\eta_1\subset Y_0$, where $2\eta$ consists of two copies of the same $1$-cycle. Suppose $\nu = \eta \times I'+\eta_1\times I$, where $I'=[0,1]$ and two boundary components of $\eta \times I'$ are on $Y_0$. Then $\eta \times I'$ is non-orientable with respect to the orientation of $\eta_0$. We take \[ \kappa = \eta \times \{\frac{1}{2}\}.\]Then $(\eta \times I')\backslash\kappa$ is the union of two annuli and there is an orientation of $(\eta \times I')\backslash\kappa$ such that $\partial(\nu\backslash\kappa)=-2\eta + 2\kappa$. There is a standard choice of $\tau(\kappa)$ induced from $\tau(\eta)$. Suppose $\wti{\nu}=(\nu,\kappa,\tau(\kappa))$. Then as a variant of Example \ref{exmp: product cob}, the corresponding cobordism map \[HM(W;\Ga_{\wti{\nu}}):HM(Y;2\eta+\eta_1)\to HM(Y;\eta_1)\]is an isomorphism. If we choose different admissible $\tau(\kappa)$, the corresponding cobordism map will change by the multiplication of $u^m$ for some $m\in\Z$. From now on, we always choose the standard framing of $\kappa$.

    Furthermore, suppose $\eta_0=2\eta+2\eta'+\eta_1$ and suppose $\wti{\nu}$ and $\wti{\nu}'$ are the corresponding enhanced local system sets. Then we have\[HM(W;\Ga_{\wti{\nu}})\circ HM(W;\Ga_{\wti{\nu}'})=HM(W;\Ga_{\wti{\nu}'})\circ HM(W;\Ga_{\wti{\nu}})=HM(W;\Ga_{\wti{\nu}+\wti{\nu}'}).\] Similar to the discussion after Example \ref{exmp: product cob}, we obtain a transitive system of $R$-modules \[HM(Y;\Ga_{\ov{\eta}}):=\{HM(Y;\Ga_{\eta}^{\tau(\eta)}),\eta-\ov{\eta}=2\eta',~\tau(\eta)~\text{admissible}\}\]
\end{example}
\subsection{Properties}
In this subsection, we study the dependence of the local system set when $u = -1$, in comparison with the discussion at the end of \S \ref{subsec: Kronheimer--Mrowka's construction} on the Kronheimer--Mrowka local system, as well as \cite[\S 2]{LY2025dimension} and \cite[\S 2]{Ye2025dualknot} on instanton Floer homology. For simplicity, we abuse the notation and use the local system set $\eta,\nu$ to denote the corresponding local system with $u=-1$. 
\begin{lemma}\label{lem: cob map depending on H_2}
    
	Let\[(W,\nu):(Y_0,\eta_0)\to (Y_1,\eta_1)\aand (W,\nu'):(Y_0,\eta_0')\to (Y_1,\eta_1')\]be two cobordisms such that $\nu'-\nu=2\rho+\partial\theta$ for some smooth relative $2$-chain $\rho$ and smooth $3$-chain $\theta$ in $W$, then we have\[HM(W;\nu')=(-1)^{\frac{\chi(\partial\theta)}{2}}HM(W;\nu)\]under the canonical isomorphisms in Example \ref{exmp: non-orientable}. Note that $\chi(\partial\theta)$ is always even.
    
\end{lemma}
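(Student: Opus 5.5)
Since $HM(W;\nu)$ is determined by the chain-level $W$-morphism, the plan is to compare the two weights $\Ga_{\nu'}([\gamma])$ and $\Ga_\nu([\gamma])$ configuration by configuration, at $u=-1$. Once the canonical isomorphisms of Example \ref{exmp: non-orientable} are inserted, the claim reduces to showing that for every $[\gamma]\in\cB^\sigma(W)$ the ratio of the two exponents in \eqref{eq: cob map} (resp.\ \eqref{eq: cob map 2}) is $(-1)^{\chi(\partial\theta)/2}$. The key point is that this ratio must come out independent of $[\gamma]$ and of the spin$^c$ structure. I will treat the two modifications separately, since the local system depends additively on the enhanced local system set: first $\nu'=\nu+2\rho$, then $\nu'=\nu+\partial\theta$.

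\emph{Step 1: the $2\rho$ part.} Replacing $\nu$ by $\nu+2\rho$ changes the curvature term by $2\cdot\frac{i}{2\pi}\int_\rho F_{A^t}$. The boundary $1$-cycles change by $2\partial\rho$, which by Example \ref{exmp: non-orientable} are absorbed through the non-orientable enhanced sets; those canonical isomorphisms absorb exactly the holonomy terms $\frac{i}{2\pi}\int_{\tau_{\det}(2\kappa)}$. The topological term changes by an even number, since self-intersection and Euler numbers of a doubled chain are even (compare Remark \ref{rem: admissible 2eta}). Therefore the exponent $\tfrac12(\cdots)$ changes by $\frac{i}{2\pi}\big(\int_\rho F_{A^t}-\int_{\partial\rho}(\text{holonomy})\big)$ plus an integer. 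By the first step of the proof of Lemma \ref{lem: even}, the bracketed quantity equals a relative Euler number, hence is an integer. What still has to be checked is that the leftover integer is even, so that $u^{\text{integer}}=\pm1$ does not spoil the result. I expect to get this by writing $2\rho$ as $\rho\cup\rho'$, with $\rho'$ a parallel push-off, and computing $(\rho+\rho')^2+\chi=4\rho^2+2\chi(\rho)$ modulo the Euler-number term $2\,e(P_{\det}|_\rho)$; the parity bookkeeping then closes by Lemma \ref{lem: even2}.

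\emph{Step 2: the $\partial\theta$ part.} For $\nu'=\nu+\Sigma$ with $\Sigma=\partial\theta$ a closed surface, Example \ref{exmp: closed} and Stokes give curvature contribution $\langle c_1(\fs_W),[\Sigma]\rangle=0$, since $[\Sigma]=0$. The topological term changes by $\Sigma\cdot\Sigma+\chi(\Sigma)$ together with cross terms $2\,\Sigma\cdot\nu$. Here $\Sigma\cdot\Sigma=0$ because $\Sigma$ is null-homologous, and $\Sigma\cdot\nu=0$ for the same reason; one has to be careful that the relative topological term is additive under union only up to twice the intersection number, and I will verify this additivity explicitly. So the exponent changes by $\tfrac12\chi(\partial\theta)$, giving the factor $(-1)^{\chi(\partial\theta)/2}$. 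The parity of $\chi(\partial\theta)$ also follows: $\partial\theta$ bounds a compact $3$-chain, and each closed surface that bounds has even Euler characteristic. Equivalently, evenness follows from Lemma \ref{lem: even} applied with $\eta_i=\emptyset$ and curvature term $0$.

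\emph{Main obstacle.} The hardest step is making the additivity of $\topo$ rigorous when $\nu$ and $\nu'$ are singular chains meeting each other, including the interaction with the framings and enhanced sets in Step 1. My first approach would be to perturb everything to general position and compute all relative Euler classes by counting zeros of extended sections on the union, as in the proof of Lemma \ref{lem: even}.
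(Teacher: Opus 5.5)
Your overall route matches the paper's. You split $\nu'-\nu$ into the $2\rho$ and $\partial\theta$ pieces. You treat $\partial\theta$ by Stokes plus Example~\ref{exmp: closed}; your Step~2 is fine, since the cross term vanishes because $\partial\theta\cdot\nu=0$. You then need the $2\rho$ piece to change the halved exponent by an even integer.

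The gap is the explicit count you propose for Step~1, which does not close. Write $2\rho=\rho\cup\rho'$ with $\rho'$ a push-off, so the two copies meet in $\rho\cdot\rho$ points (signed count). Your value $(\rho+\rho')^2+\chi=4\,\rho\cdot\rho+2\chi(\rho)$ mixes two incompatible conventions:
\begin{itemize}
\item $4\,\rho\cdot\rho$ is the normal Euler number of an \emph{embedded} representative of $2[\rho]$;
\item $2\chi(\rho)$ is the Euler characteristic of the \emph{immersed} (disjoint-union) one.
\end{itemize}
For the immersed surface the normal Euler number is $[2\rho]^2-2\,\#\{\text{signed double points}\}=2\,\rho\cdot\rho$. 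For an embedded resolution, each smoothed double point lowers $\chi$ by $2$.

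With your numbers the halved exponent changes by $e(P_{\det}|_\rho)+2\,\rho\cdot\rho+\chi(\rho)$. Lemma~\ref{lem: even} (or \ref{lem: even2}) only controls the parity of $e(P_{\det}|_\rho)+\rho\cdot\rho+\chi(\rho)$, so the discrepancy is $\rho\cdot\rho \bmod 2$. Concretely, take $\rho=E$, the closed $(-1)$-sphere of Proposition~\ref{prop: first homotopy example}; no canonical isomorphisms are involved. Your count gives the change $\langle c_1(\fs),E\rangle$, which is odd for every $\fs$. It would therefore yield $HM(W;\nu+2E)=-HM(W;\nu)$, i.e.\ the lemma with a spurious factor $(-1)^{\rho\cdot\rho}$. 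No choice between the two parity lemmas repairs this.

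The consistent bookkeeping, which the paper implicitly uses when it says the factor $2$ makes the exponent even, is $\topo(\rho\cup\rho')=\topo(\rho)+\topo(\rho')=2\topo(\rho)$, with each copy computed using its own boundary framing. Equivalently, an orientation-respecting resolution of a double point changes $\topo$ by $0$ or $-4$, which does not affect parity after halving. The halved exponent then changes, up to an even integer, by $e(P_{\det}|_\rho,\tau_{\det}(\partial\rho))+\topo(\rho)$. This is even by Lemma~\ref{lem: even} applied to $\rho$ itself:
\begin{itemize}
\item when $\rho$ is closed this is automatic, being the condition $c_1(\fs)\equiv w_2(W)$;
\item otherwise it is exactly the framing dependence that the standard choice in Example~\ref{exmp: non-orientable} is meant to absorb.
\end{itemize}
You anticipated in Step~2 that $\topo$ is additive only up to twice an intersection number. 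There the cross term is harmless, but in Step~1 the cross term $2\,\rho\cdot\rho'$ is generally nonzero. Keeping it without the compensating drop in $\chi$ is precisely what breaks the parity.
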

\bpf
By functoriality of the cobordism map, we can consider the two terms $2\rho$ and $\partial \theta$ separately. 

When $\theta=\emptyset$, the factor of $2$ before $\rho$ makes the exponential term in the cobordism map \eqref{eq: cob map} becomes an even integer. Hence $u=-1$ and the choice of canonical isomorphisms in Example \ref{exmp: non-orientable} implies that it does not affect the cobordism map. 

When $\rho=\emptyset$, for the exponential term in \eqref{eq: cob map}, the curvature term vanishes due to the Stokes' theorem, and the topological term is computed in Example \ref{exmp: closed}.
\epf
\brem\label{rem: non id}
    From Lemma \ref{lem: cob map depending on H_2}, the composition in \eqref{eq: non id} becomes $(-1)^{\chi(\nu)} \id$ rather than $\id$ when $u=-1$. This is because $\nu\cup(-\nu)=\partial(\nu\times I)$. This non-additive result of $2$-chain should be considered carefully, so sometimes we prefer to use $\cup$ rather than $+$ to denote the union of $2$-chains.
\erem
\begin{lemma}\label{lem: I_S is an iso}
	Suppose $Y$ is a closed oriented $3$-manifold. Suppose $\eta_1,\eta_2\subset Y$ are two local system sets such that $[\eta_1] = [\eta_2] \in H_1(Y;\Z/2)$. Then there exists a $1$-cycle $\eta$ and $2$-chain $S$ in $Y$ with $\partial S = \eta_1-\eta_2+2\eta$. Let \[S'=S\times\{0\}\cup \eta_1\times I\aand S''=-S\times\{0\}\cup(\eta_2-2\eta)\times I\subset Y\times I\]and let the corresponding cobordism maps by \[\bI_{S}'=HM(Y\times I;S' ):HM(Y;\eta_2)\cong HM(Y;\eta_2-2\eta)\to HM(Y;\eta_1)\aand \]\[ \bI_{S}''=HM(Y\times I;S'' ):HM(Y;\eta_1)\to HM(Y;\eta_2-2\eta)\cong HM(Y;\eta_2),\]where the canonical isomorphisms are from Example \ref{exmp: product cob}. Then we have the following.
	\begin{itemize}
		\item For any choice of $S$, the maps $\bI_S'$ and $\bI_{S}''$ are isomorphisms.
		\item If $H_2(Y;\Z) = 0$ and $[\eta_1] = [\eta_2] \in H_1(Y;\Z)$, then, up to sign, $\bI_S'$ is independent of the choice of $S$. As a consequence, up to sign, the $\Z$-modules $HM(Y;\eta_1)$ and $HM(Y;\eta_2)$ are canonically isomorphic.
	\end{itemize}
\end{lemma}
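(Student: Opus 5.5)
The plan is to prove both bullets by composing the two maps and reducing the composite to a product cobordism with a closed (or doubled) local system set. Existence of $\eta$ and $S$ is standard. Since $[\eta_1-\eta_2]=0$ in $H_1(Y;\Z/2)$, the integral class $[\eta_1-\eta_2]$ lies in $2H_1(Y;\Z)$, say $[\eta_1-\eta_2]=-2[\eta]$. Hence $\eta_1-\eta_2+2\eta$ bounds an integral $2$-chain $S$.

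For the first bullet, I would compute $\bI_S''\circ\bI_S'$ and $\bI_S'\circ\bI_S''$ by functoriality (Proposition \ref{prop: HM}). Gluing two copies of $Y\times I$ gives $Y\times[0,2]$. The composite $2$-chain is $S\times\{0\}\cup\eta_1\times[0,1]\cup(-S)\times\{1\}\cup(\eta_2-2\eta)\times[1,2]$. I will modify this chain by an explicit boundary $\partial\theta$ with $\theta=S\times[0,1]$. Its boundary is
\[
\partial\theta=S\times\{1\}-S\times\{0\}\pm(\eta_1-\eta_2+2\eta)\times[0,1].
\]
Adding it cancels the two $S$ pieces and leaves the vertical chain $(\eta_2-2\eta)\times[0,2]$, up to the orientation conventions. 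By Lemma \ref{lem: cob map depending on H_2}, the composite therefore equals $(-1)^{\chi(\partial\theta)/2}$ times the product cobordism map with local system set $(\eta_2-2\eta)\times I$. Under the canonical identifications of Example \ref{exmp: product cob} and Example \ref{exmp: non-orientable}, that map is the identity up to sign. The same argument works in the other order. So each map has a two-sided inverse up to sign, and both are isomorphisms.

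The main obstacle is bookkeeping rather than ideas. The term $2\eta$ passes between $HM(Y;\eta_2)$ and $HM(Y;\eta_2-2\eta)$ through the non-orientable enhanced sets of Example \ref{exmp: non-orientable}, and I must check that these identifications commute with the gluing. I would handle this by writing $\nu'-\nu=2\rho+\partial\theta$ with $\rho=\eta\times I$, so that the $2\rho$ part is absorbed exactly as in Lemma \ref{lem: cob map depending on H_2}. I would also check that $\chi(\partial\theta)$ is even; the Lemma already asserts this. If the signs from the topological term are awkward to track, they do not matter here, since only invertibility is claimed.

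For the second bullet, take two choices $S_1$ and $S_2$, which may come with different $\eta$. Since $[\eta_1]=[\eta_2]$ in $H_1(Y;\Z)$, I may take $\eta=0$ for both; if a given $S$ uses a nonzero $\eta$, I first compare it to an $\eta=0$ choice by absorbing $2\eta\times I$ as above. Then $\partial(S_1-S_2)=0$, so $S_1-S_2$ is a $2$-cycle. Because $H_2(Y;\Z)=0$, it is a boundary $\partial\sigma$ of a $3$-chain $\sigma$ in $Y$. It follows that $S_1'-S_2'=\partial(\sigma\times\{0\})$, and Lemma \ref{lem: cob map depending on H_2} gives $\bI_{S_1}'=\pm\bI_{S_2}'$. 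This yields a canonical isomorphism up to sign.
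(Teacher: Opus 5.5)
Your argument for the first bullet, and for the second bullet when both choices have $\eta=0$, is the paper's argument. In the first bullet the composite set differs from the product set by $\partial(\pm S\times I)$. In the second, two choices with $\partial S=\eta_1-\eta_2$ differ by a $2$-cycle that bounds because $H_2(Y;\Z)=0$. Lemma \ref{lem: cob map depending on H_2} then finishes both.

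The step that fails is your extra reduction: ``if a given $S$ uses a nonzero $\eta$, compare it to an $\eta=0$ choice by absorbing $2\eta\times I$ as above.''

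\begin{itemize}
\item In the first bullet, $2\eta\times I$ is a product whose two ends are exactly the identifications of Example \ref{exmp: non-orientable}, so it can be absorbed.
\item Here, instead, the discrepancy lives in a single slice. If $\partial S_1=\eta_1-\eta_2+2\eta$ and $\partial S_2=\eta_1-\eta_2$, then $\partial(S_1-S_2)=2\eta$. Writing $S_1-S_2=2\rho+\partial\theta$ forces $\eta=\partial\rho$, i.e.\ $[\eta]=0\in H_1(Y;\Z)$, whereas you only know $2[\eta]=0$.
\item Said differently, $S_1-S_2$ capped by the band is a mod $2$ cycle whose Bockstein is $[\eta]$. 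The Bockstein $H_2(Y;\Z/2)\to H_1(Y;\Z)$ is injective when $H_2(Y;\Z)=0$, so this class is nonzero whenever $[\eta]\neq 0$.
\end{itemize}

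The conclusion then genuinely fails. Take $Y=\R P^3$, $\eta_1=\eta_2=\emptyset$, $\eta=\R P^1$, and $S$ the $2$-cell of $\R P^2$, so that $\partial S=2\eta$. The computation in the proof of Proposition \ref{prop: recover LRS} shows that replacing $\fs$ by $\fs+a$, for $a$ the generator of $H^2(\R P^3;\Z)$, multiplies the sign by $(-1)^{\langle a,[\R P^2]\rangle_{\Z/2}}=-1$. Thus $\bI'_S$ acts by opposite signs on the two (nonzero) spin$^c$ summands, and it is not $\pm\bI'_{\emptyset}=\pm\id$.

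So drop this step. As the paper does, read the second bullet (and the word ``canonical'') for choices with $\eta=0$; such choices exist precisely because $[\eta_1]=[\eta_2]$ in $H_1(Y;\Z)$.
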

\bpf
Both arguments follow from Lemma \ref{lem: cob map depending on H_2}. For the first argument, the local system set in the composition $\bI_{S}''\circ \bI_{S}'$ differs $(\eta_2-2\eta)\times [0,2]$ by $\partial (-S\times I)$. Hence the composition equals the identity up to sign. Similarly, the composition $\bI_{S}'\circ \bI_{S}''$ equals the identity up to sign. Hence both maps are isomorphisms.

For the second argument, we have $\eta=0$ because $[\eta_1] = [\eta_2] \in H_1(Y;\Z)$. Different choices of $S$ differ by a $2$-cycle in $Y$, which is the boundary of a $3$-chain by $H_2(Y;\Z) = 0$.
\epf
\brem\label{rem: enhanced level set}
From Example \ref{exmp: non-orientable}, the isomorphisms $\bI_S'$ and $\bI_S''$ in Lemma \ref{lem: I_S is an iso} are related to the enhanced local system set \[\wti{S}=(S\cup (\eta\times I),\kappa=\eta\times \{1/2\},\tau(\kappa)),\]where $S\cup (\eta\times I)$ is an unorientable $2$-chain with boundary $\eta_1-\eta_2$.
\erem
\section{Triangle over local system}\label{sec: Triangle over local system}
In this section, we prove the surgery exact triangle over our local system in \S \ref{subsec: Our construction}. We set $u=-1$ in the local system throughout this section and use the local system sets $\eta,\nu$ to denote the corresponding local system. Then all monopole Floer homologies are $\Z$-modules (possibly with $U$-actions).

\bthm\label{thm: local system triangle}
Let $(Y_0,Y_1,Y_2)$ be a surgery triad and let $W_i:Y_i\to Y_{i+1}$ for $i\in\Z/3$ be the corresponding surgery cobordism. Let $\eta_i=K_i\subset Y_i$ be the attaching circle for the $2$-handle in $W_i$ and let $\nu_i\subset W_i$ be the union of core disk and cocore disk. Let $\omega \subset Y_i\backslash K_i$ be any (possibly empty) $1$-submanifold away from the surgery region. Then there exists an exact triangle
\[	\xymatrix{
	HM(Y_0;\eta_0\cup \omega)\ar[rr]^{HM(W_0;\nu_0\cup (\omega\times I))}&& HM(Y_1;\eta_1\cup \omega)\ar[dl]^{\quad\quad HM(W_1;\nu_1\cup (\omega\times I))}\\
	&HM(Y_2;\eta_2\cup \omega)\ar[lu]^{HM(W_2;\nu_2\cup (\omega\times I))\quad\quad}&
	}\]
\ethm
The proof basically follows \cite[\S 7]{Freeman2021triangle}, where the cases of \begin{equation}\label{eq: three versions}HM=\widehat{HM}_\bu,~\widecheck{HM}_\bu\aand\overline{HM}_\bu
\end{equation}are treated similarly. The case of $\widetilde{HM}_\bu$ is a special case in the proof of Proposition \ref{prop: ss local system} for $l=1$, which is based on the proof of $\widecheck{HM}_\bu$. So we omit this case in this section.

In this section, we use $HM$ to denote any fixed version in \eqref{eq: three versions}. The proof of Theorem \ref{thm: local system triangle} follows the standard approach based on the triangle detection lemma over $\Z$ (cf.\ \cite[Lemma 7.1]{kronheimer2011khovanov}, \cite[Lemma 5.1]{scaduto2015instantons}, and \cite[Proposition 7.1]{Freeman2021triangle}). We construct the first homotopy and the second homotopy via cobordism maps for some families of metrics (see the proof of \cite[Proposition 25.3.8]{kronheimer2007monopoles} for the construction of cobordism maps with families). We omit details in the standard construction and point out the main difference for our local system, assuming that the readers are familiar with the proof of the monopole surgery exact triangle over $\Z/2$ in \cite[\S 5]{kronheimer2007monopolesandlens}.
\subsection{The first homotopy}

The first homotopy is a chain homotopy from the double composition \[CM(W_{i+1};\nu_{i+1}\cup (\omega\times I)) \circ CM(W_i;\nu_i\cup (\omega\times I))\]to the zero map. This phenomenon comes from the appearance of an $\ov{\CP^2} \backslash \mathring{B}^4$ in the union $W_i \cup W_{i+1}$. Since the construction is cyclic, without loss of generality, we assume $i=0$. We start with a toy case, where the local system sets are simpler. Note that we use $\nu_i$ to denote different local system sets in the following propositions.
\bprop\label{prop: first homotopy example}


The composition of cobordisms $W_0\cup_{Y_1}W_1$ contains a sphere with self-intersection number $-1$. We denote a collar neighborhood of this sphere by $Z_1$, which is diffeomorphic to $\ov{\CP^2} \backslash \mathring{B}^4$. This sphere contains two semi-spheres: the cocore disk $\nu_0$ in $W_0$, and the core disk $\nu_1$ in $W_1$. Then the cobordism map\[HM(W_0\cup_{Y_1}W_1;\nu_0 \cup \nu_1)\]with our local system vanishes. Moreover, there is a chain homotopy from\[CM(W_0\cup_{Y_1}W_1;\nu_0 \cup \nu_1)\]to the zero map.
\eprop
\begin{proof}
We set:
\begin{itemize}
    \item $\eta_0 =\emptyset$ a cycle on $Y_0$;
    \item $\eta_{10}=\partial\nu_0 $ a cycle of $\{0\}\times Y_1$, with $\tau_N(\eta_{10}) =\gamma_{0-1}= \gamma_2$;
    \item $\eta_{11}=\partial\nu_1$ a cycle of $\{1\}\times Y_1$, with $\tau_N(\eta_{11}) = \gamma_2$;
    \item $\eta_2 =\emptyset$ a cycle on $Y_2$;
    \item $\tau_{T}(\eta_{10})$ and $\tau_{T}(\eta_{11})$ are given by the tangent vector of $\eta_{10}$ and $\eta_{11}$.
\end{itemize}
Then we have 
\[
\langle e(T_{\nu_0}, \tau_T(\eta_{10})),[\nu_0]\rangle = 1 = \langle e(T_{\nu_1}, \tau_T(\eta_{11})),[\nu_1]\rangle 
\]
\[
\langle e(N_{\nu_0}, \tau_N(\eta_{10})),[\nu_0]\rangle = -1  
\]
\[
\langle e(N_{\nu_1}, \tau_N(\eta_{11})),[\nu_1]\rangle  = 0.
\]
To satisfy the admissible condition
\[
\langle e(P_{{\det}}|_{\nu_0}, \tau_{\det}(\eta_{10})),[\nu_0]\rangle \equiv 0 \quad \text{mod } 2 
\]
\[
\langle e(P_{{\det}}|_{\nu_1}, \tau_{\det}(\eta_{11})),[\nu_1]\rangle \equiv 1\quad \text{mod } 2. 
\]

With the above setting, the framing $(\tau_N(\eta_{10}),\tau_T(\eta_{11}))$ corresponds to $(\tau_N(\eta_{11}),\tau_T(\eta_{11}))$. If $\fs$ is a spin$^c$-structure on the composition $W_0\cup_{Y_1}W_1$, which implies that $\langle c_1(\fs), [\nu_0\cup\nu_1]\rangle$ is odd, then an admissible framing $\tau_{\det}(\eta_{10})$ must correspond to an admissible framing $\tau_{\det}(\eta_{11})$. In summary, we have the following table for the zeros given by the framings:
\[
\begin{array}{c|c|c|c}
\#(\text{zeros}) & \nu_0 & \nu_1 & \nu_0 \cup \nu_1 \\ \hline \tau_T & 1 & 1 & 2 \\ \hline
\tau_N & -1 & 0 & -1 \\ \hline \text{possible } \tau_{\det} & 0 & 1 & 1 \\ \hline
\text{possible } \tau_{\det} & -2 & 3 & 1 \\ \hline \text{possible } \tau_{\det} & 0 & 3 & 3
\end{array}
\]
We see that the topological term is $2-1 = 1$, while the evaluation of the spin$^c$-structure may change if we change the determinant framings. In Definition \ref{def:cob} we have the formula for this composition $\Gamma_{\nu_0 \cup \nu_1}$:
\[\begin{aligned}
  \exp_u\big( {\frac{1}{2} \frac{i}{2\pi}\int_{\tau_{\det}(\eta_0)} B_0^t }\big)R \to& \exp_u\big({\frac{1}{2} \frac{i}{2\pi}\int_{\tau_{\det}(\eta_2)} B_2^t } \big)R  \\
  \exp_u\big({\frac{1}{2} \frac{i}{2\pi}\int_{\tau_{\det}(\eta_0)} B_0^t + k}\big) \mapsto& \exp_u\big(\frac{1}{2} \big(\frac{i}{2\pi}\int_{\tau_{\det}(\eta_0)} B_0^t +k\\&  +\frac{i}{2\pi}\int_{\nu_0\cup\nu_1} F_{A^t}+\topo(W,  {\nu_0\cup\nu_1}, \tau(\eta_2-\eta_0))\big)  \big)
\end{aligned}\]
Now we compare the image of the base point of the left-hand-side, with the base point of the right-hand-side (the difference between them is the degree shift of $u$ in the cobordism map):
\begin{equation}\label{equ:basepoint-difference}
\frac{1}{2} \big(\frac{i}{2\pi}\int_{\tau_{\det}(\eta_0)} B_0^t +\frac{i}{2\pi}\int_{\nu_0\cup\nu_1} F_{A^t}  +\topo(W,  \nu_0\cup\nu_1, \tau(\eta_2-\eta_0))\big)  -{\frac{1}{2} \frac{i}{2\pi}\int_{\tau_{\det}(\eta_2)} B_2^t }
\end{equation}
Here we have set $\eta_0 = \eta_2 = \emptyset$. Since we have 
\[
\frac{i}{2\pi}\int_{\nu_0\cup\nu_1} F_{A^t}  = \langle c_1(\fs), [\nu_0\cup\nu_1]\rangle,
\]
the difference (degree shift of $u$) is 
\begin{equation}\label{equ:difference on index for punctured CP2 cobordism}
\frac{1}{2}(\langle c_1(\fs), [\nu_0\cup\nu_1]\rangle + 1).
\end{equation}

We want to show the cobordism map already vanishes on a neighborhood of $\nu_0\cup\nu_1$. Recall that we denote it by $Z_1$ and $Z_1 =\ov{\CP^2} \backslash \mathring{B}^4 \subset W_0\cup_{Y_1}W_1$.

The cobordism map for one spin$^c$-structure with the trivial local system $\Gamma_\emptyset = \Z$ is given by (see \cite[Example 2.5]{Freeman2021triangle})
\[
\widehat{HM}(\ov{\CP^2} \backslash \mathring{B}^4, \fs_m)(e_j) = e_{j-\frac{m^2-1}{4}},
\]
where $j$ is a nonpositive even number, $\fs_m$ is the spin$^c$-structure that evaluates an odd number $m$ at the exceptional sphere $S^2 \simeq E_1 \subset \overline{\CP^2}$. Hence for the trivial local system $\Gamma_\emptyset = \Z$, the cobordism map of monopole Floer homology is the sum over all spin$^c$-structures:
\[
\widehat{HM}(\ov{\CP^2} \backslash \mathring{B}^4; \Gamma_\emptyset)(e_0) = 2e_0 + 2e_{-2} + 2 e_{-6} + \cdots.
\]
Observe that this vanishes over $\Z/2$ coefficients. On the other hand, by \eqref{equ:difference on index for punctured CP2 cobordism} the cobordism map for $\widehat{HM}$ with our local system is 
\begin{equation}
\widehat{HM}(\ov{\CP^2} \backslash \mathring{B}^4; \Gamma_{\nu_0\cup\nu_1})(e_0) = u^0e_0  +u^{-1}e_0 + u^1 e_{-2} + u^{-2}e_{-2} +u^2 e_{-6} + u^{-3}e_{-6} + \cdots.
\end{equation}
When $u=-1$, this map vanishes over $\Z$ coefficients. 

For the behavior on the chain level, we follow the argument in \cite[\S 5.1]{kronheimer2007monopolesandlens}. The boundary of $Z_1 = \ov{\CP^2} \backslash \mathring{B}^4$ is $S^3$, and we label critical points in $\cB^\sigma(S^3)$ as $\mathfrak{a}_{\lambda_i}$, where $\lambda_i$ are the eigenvalues of a self-adjoint Fredholm operator obtained as a small perturbation of the Dirac operator on $S^3$. For each critical point, we have a moduli space $M_z(Z_1, \mathfrak{a}_{\lambda_i})$. The choice of $z$ is equivalent to a choice of spin$^c$-structure $\fs$ on $Z_1$. Again, we use $\fs_m$ to denote the spin$^c$-structure that evaluates an odd number $m$ at the exceptional sphere $S^2 \simeq E_1 \subset Z_1$. In \cite[Lemma 5.3]{kronheimer2007monopolesandlens}, they showed that
\begin{itemize}
\item For $i\ge 0$, $M_z(Z_1, \mathfrak{a}_{\lambda_i})$ is empty.
\item For $i< 0$, $M_z(Z_1, \mathfrak{a}_{\lambda_i})$ contains a single point if the formal dimension is zero.
\item $M_{\fs_{2k-1}}(Z_1, \mathfrak{a}_{\lambda_i})$ and $M_{\fs_{1-2k}}(Z_1, \mathfrak{a}_{\lambda_i})$ have the same formal dimensions.
\end{itemize}
By \eqref{equ:chain-level-differential} and \eqref{equ:difference on index for punctured CP2 cobordism}, a point in $M_{\fs_{2k-1}}(Z_1, \mathfrak{a}_{\lambda_i})$ contributes to $k$-degree-shift and a point in $M_{\fs_{1-2k}}(Z_1, \mathfrak{a}_{\lambda_i})$ contributes to $(1-k)$-degree-shift, if the formal dimension is zero. The sum of $u^k$ and $u^{1-k}$ vanishes when $u=-1$, so the cobordism map for $CM$ with our local system is chain homotopic to zero. 
\end{proof}

Now we proceed to prove the double composite of cobordism maps in Theorem \ref{thm: local system triangle} is trivial. The only difference from the previous example is that, there are two additional semi-spheres in the local system sets.
\bprop\label{prop: first homotopy}

In the setup of Proposition \ref{prop: first homotopy example}, let $\nu_{-1}$ be the core disk in $W_0$, and let $\nu_{2}$ be the cocore disk in $W_1$. Then the cobordism map \[HM(W_0\cup_{Y_1}W_1; \nu_{-1} \cup \nu_0 \cup \nu_1 \cup \nu_{2})\]vanishes. Moreover, there is a chain homotopy from \[CM(W_0\cup_{Y_1}W_1; \nu_{-1} \cup \nu_0 \cup \nu_1 \cup \nu_{2})\]to the zero map.
\eprop

\begin{proof}

Note that the neighborhood $Z_1$ of $\nu_0\cup \nu_1$ is a disk bundle over $\nu_0\cup \nu_1$. Since $\nu_{-1}$ and $\nu_0$ intersects once, we can suppose that $\nu_{-1}\cap Z_1$ is a fiber disk $\nu_{-1}'$. Similarly, we suppose $\nu_2\cap Z_1$ is a fiber disk $\nu_2'$. Then by Lemma \ref{lem: cob map depending on H_2}, we have
\[\widehat{HM}(\ov{\CP^2} \backslash \mathring{B}^4; \nu_{-1}'\cup\nu_0\cup\nu_1\cup\nu_2')(e_0) = \widehat{HM}(\ov{\CP^2} \backslash \mathring{B}^4;\nu_0\cup\nu_1)(e_0)=0\]

The chain level argument is the same as the last step in the proof of Proposition \ref{prop: first homotopy example}. Note that the proof of Lemma \ref{lem: cob map depending on H_2} also works on the chain level.
\end{proof}

\subsection{The second homotopy}\label{subsec: second homotopy}
Following \cite[\S 5.2]{kronheimer2007monopolesandlens}, we consider a triple composite (see Figure \ref{fig:V1-KMOS})
 \begin{figure}
    \begin{Overpic}{\includegraphics[scale=0.7]{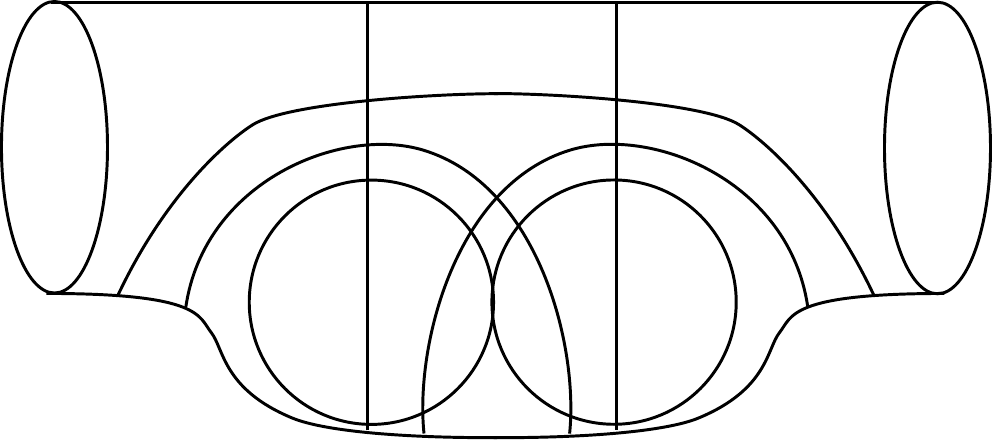}}   \put(10,41){$Y_1$}
    \put(87,41){$Y_1$}
    \put(32,41){$Y_2$}
    \put(63,41){$Y_3$}
    \put(48,36){$R_1$}
    \put(27,17){$E_1$}
    \put(69,17){$E_2$}
    \put(17,9){$S_1$}
    \put(80,9){$S_2$}
    \end{Overpic}
    \caption{The triple composite $V_1$ (cf.\ \cite[Figure 2]{kronheimer2007monopolesandlens}). This indicates the five $3$-manifolds which separate $V_1$.}
    \label{fig:V1-KMOS}
\end{figure}
\[
V_1 = W_1\cup_{Y_2} W_2\cup_{Y_3}W_3.
\]
Inside it, there are two $(-1)$-spheres intersect transversally at one point:
\begin{itemize}
    \item $E_1$. The union of the cocore disk in $W_1$ and the core disk in $W_2$.
    \item $E_2$. The union of the cocore disk in $W_2$ and the core disk in $W_3$.
\end{itemize}
and five separating $3$-manifolds:
\begin{itemize}
    \item $S_i$. The boundary of the neighborhood of the $(-1)$-sphere $E_i$, $i=1,2$. Note that $S_i\cong S^3$. They intersect transversely in a $2$-torus.
    \item $R_1$. The boundary of the neighborhood of $E_1 \cup E_2$. Note that $R_1\cong S^1 \times S^2$ (see Figure \ref{fig:R1}).
    \item $Y_2$.
    \item $Y_3$.
\end{itemize}
 \begin{figure}
    \begin{Overpic}{\includegraphics[scale=0.6]{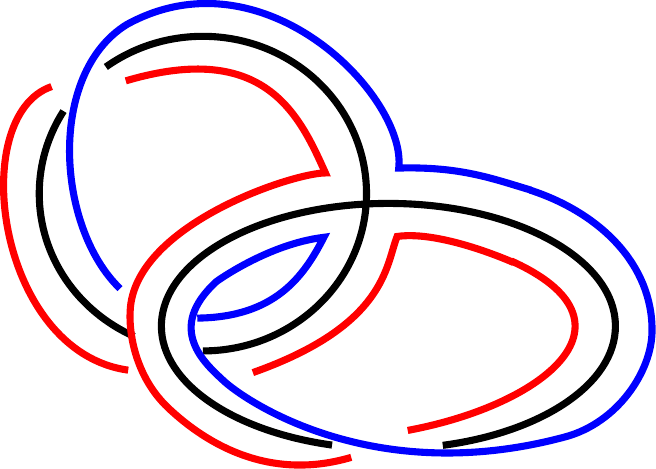}}   
    \end{Overpic}
    \caption{This figure is a low-dimensional description for $E_1\cup E_2$ and its neighborhood. Spheres $E_1$ and $E_2$ are depicted by black. They intersect at one point. $R_1$ is depicted by the colorful part.}
    \label{fig:R1}
\end{figure}
 Note that $V_1$ is a cobordism of $3$-manifolds so it induces a chain map. Following the assumption of Theorem \ref{thm: local system triangle}, we assume the local system set is the union of all three core disks and cocore disks. Let {$L$} be the chain homotopy on some version of monopole Floer homology induced by a $1$-parameter family of metrics on $V_1$ indexed by $t$, where:
 \begin{itemize}
    \item $R_1$ is stretched to $R_1\times [-\infty,\infty]$ for any $t$.
    \item $S_2$ is stretched to $S_2\times [-\infty,\infty]$ at $t=-\infty$.
    \item $S_1$ is stretched to $S_1\times [-\infty,\infty]$ at $t=\infty$.
\end{itemize}
We call this $1$-parameter family of metrics $Q(R_1)$, which is a notation in \cite[Figure 3]{kronheimer2007monopolesandlens}. To prove the second homotopy used for the triangle detection lemma (over $\Z$), all statements in \cite[Pages 77-78]{Freeman2021triangle} (see also \cite[\S 5.2]{kronheimer2007monopolesandlens}) go through except that $L$ induces isomorphisms in homology (cf.\ \cite[Proposition 5.6]{kronheimer2007monopolesandlens}).

In the rest of this subsection, we prove the following proposition, which is an analogous result of \cite[Lemma 7.10]{Freeman2021triangle}. Some steps in the proof are from \cite[\S 7.4]{Freeman2021triangle} but with modifications and more details. Together with Proposition \ref{prop: first homotopy}, Theorem \ref{thm: local system triangle} follows from the standard procedure based on the triangle detection lemma.
\bprop\label{prop:L-iso}
$L$ is an isomorphism.
\eprop

From \cite[Lemma 7.8]{Freeman2021triangle} (see also \cite[\S 5.3]{kronheimer2007monopolesandlens}), the condition whether the map $L$ is an isomorphism does not depend on $Y_1$, the knot $K$, or the surgery coefficient of $K$. Hence without loss of generality we can assume $Y_1= S^3$, $Y_2 = S^1\times S^2$ and $Y_3= S^3$. In this case $V_1$ is  twice-punctured $\CP^2\#2\ov{\CP^2}$, which is also diffeomorphic to twice-punctured $(S^2\times S^2)\#\overline{\CP^2}$; see Figures \ref{fig:single_E2_Kirby} and \ref{fig:single_Kirby}. The proof that $L$ is an isomorphism is divided into the following steps:
\begin{enumerate}
\item Choose a metric with positive scalar curvature for $V_1$, so that there are only reducible solutions. This is because the positive scalar curvature metric is closed under the connected sum operation for compact $4$-manifolds  \cite{SY,GL80scalar}.
\item  Find out spin$^c$-structures that have wall-crossing during $Q(R_1)$. It turns out that there are three types of such spin$^c$-structures.
\item Prove that the first two types contribute to zero and the third type contributes to the isomorphism.
\end{enumerate}

In the cobordism map of monopole Floer homology, we will fix a perturbing self-dual $2$-form $\omega$ for the cobordism (a $4$-manifold) of $3$-manifolds. During the $1$-parameter family $Q(R_1)$ of cobordisms (a family of $4$-manifolds), this perturbation is fixed. For the $4$-dimensional Seiberg-Witten equation, we have
\[
F^+_A  + \omega^+ = \sigma^+(\Phi).
\]
For a reducible solution, we have $\Phi = 0$. This happens when $\omega^+$ is on the wall:
\[
\{\eta \in \Omega^{2,+} | \exists A \in \mathcal{A}(\fs), F_A^+ + \eta = 0 \},
\]
where $\mathcal{A}(\fs)$ is the space of spin$^c$-connections for the spin$^c$-structure $\fs$.
\begin{lemma}
The set
\[
\{\eta \in \Omega^{2,+} | \exists A \in \mathcal{A}(\fs), F_A^+ + \eta = 0 \}
\]
is an affine space with codimension $b^+_2$.
\end{lemma}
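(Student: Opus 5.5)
The plan is to reduce the wall to a statement about the affine map $A\mapsto F_A^+$ and then use Hodge theory on the (compact, possibly with boundary, or cylindrical-end) cobordism. Fix a reference spin$^c$ connection $A_0\in\mathcal{A}(\fs)$. Every other connection is $A=A_0+a\otimes\id_S$ for an imaginary $1$-form $a$, so $A^t=A_0^t+2a$ and $F_A^+=F_{A_0}^+ + d^+ a$ (up to the harmless factor of $2$ coming from the determinant line, which we absorb into $a$). Hence the wall equals
\[
-F_{A_0}^+ - \operatorname{im}\big(d^+:\Omega^1(i\R)\to\Omega^{2,+}(i\R)\big),
\]
which is a translate of the linear subspace $\operatorname{im} d^+$. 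So the set is an affine space, modelled on $\operatorname{im}d^+$, and the codimension equals $\dim\operatorname{coker} d^+$.

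Next we identify $\operatorname{coker} d^+$ with $\mathcal{H}^+$, the space of self-dual harmonic forms. On a closed $4$-manifold the operator $d^*+d^+$ is elliptic, $\Omega^{2,+}=\operatorname{im}d^+\oplus\mathcal{H}^+$ $L^2$-orthogonally, and $\dim\mathcal{H}^+=b_2^+$. A self-dual form $\omega$ orthogonal to $\operatorname{im}d^+$ satisfies $d^*\omega=0$, and self-duality then gives $d\omega=0$, so $\omega$ is harmonic. The setting here is a cobordism with boundary, to which cylindrical ends are attached (as in \cite{kronheimer2007monopoles}). For that case we would invoke the Atiyah--Patodi--Singer / extended $L^2$ framework. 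The cokernel of $d^+$ on suitable weighted Sobolev spaces is the space of $L^2$ self-dual harmonic forms. By the APS argument this space is isomorphic to the positive part of the image of $H^2_c\to H^2$, whose dimension is $b_2^+(V_1)$.

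The main obstacle is this analytic step on the non-compact or bounded manifold. One must choose function spaces (for instance $L^2_k$ with a small exponential weight, or the $\omega$ compactly supported as the paper's perturbations are) so that three things hold:
\begin{itemize}
\item $\operatorname{im}d^+$ is closed;
\item the cokernel is finite-dimensional;
\item the cokernel is identified with $H^+$ of the cobordism rather than some boundary-sensitive space.
\end{itemize}
We would handle this by citing the Fredholm theory for $d^*+d^+$ on manifolds with cylindrical ends. The key input is that on the ends the relevant operator is translation-invariant with kernel coming from $H^1(Y_j)$. The weight shifts these zero modes off, so they do not contribute to the self-dual cokernel.

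For the concrete $V_1$ (a twice-punctured $\CP^2\#2\ov{\CP^2}$ with $S^3$ boundaries), the ends contribute nothing and $b_2^+=1$. In that case the wall is a codimension-one affine hyperplane, which is exactly what the wall-crossing count in a $1$-parameter family needs.
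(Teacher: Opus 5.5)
The paper gives no proof of this lemma. It quotes it as the standard Hodge-theoretic fact and uses it only for $V_1$, whose two ends are $S^3$. Your argument is exactly that standard justification, and it is correct: the wall is the translate $-F^+_{A_0}+\operatorname{im}d^+$, and by ellipticity of $d^*+d^+$ one has $\Omega^{2,+}=\operatorname{im}d^+\oplus\mathcal{H}^+$, since a self-dual form orthogonal to $\operatorname{im}d^+$ is coclosed and hence closed.

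One claim in your cylindrical-end paragraph needs its sign fixed. Whether a small weight removes the $H^1(Y_j)$ zero modes from the self-dual cokernel depends on the direction of the weight.

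\begin{itemize}
\item If the $1$-forms $a$ must decay on the ends, as they do when $A$ is required to converge to prescribed critical points, the cokernel of $d^+$ is the space of \emph{extended} $L^2$ self-dual harmonic forms. This can be strictly larger than $b_2^+$ once some $b_1(Y_j)>0$.
\item For example, take $D^2\times T^2$ with a product metric giving $D^2$ a cylindrical end. Then $\tfrac12(\mathrm{vol}_{D^2}+\mathrm{vol}_{T^2})$ is a bounded self-dual harmonic form that pairs to zero with $d^+a$ for every decaying $a$, although $b_2^+=0$.
\item You get exactly the $L^2$ harmonic forms, and hence dimension $b_2^+$, only if the domain is allowed slight exponential growth.
\end{itemize}

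As you note, the ends of $V_1$ are $S^3$, so this does not affect the application in the paper.
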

During the $1$-parameter family $Q(R_1)$, the perturbation $\omega$ is fixed while the metric is changed. To find out the spin$^c$-structure that will cross the wall during $Q(R_1)$, we may want to eliminate the superscript $+$ of $F_A^+$, as the projection to the self-dual part depends on the metric. We use a reference self-dual vector perpendicular to the wall to project $2$-forms:

In our case $b^+_2=1$, so $H^{2,+}$ is a line and the complement of the wall has two components. For every orientation of $H^{2,+}$ there exists a unique (up to a real positive scalar coefficient) self-dual harmonic $2$-form
\[
\kappa_g \in H^{2,+}
\]
representing the given orientation of $H^{2,+}$. Then there exists a reducible solution if and only if
\begin{equation}\label{equ:wall}
(2\pi c_1(\fs) \cup [\kappa_g])[V_1]+ 4i\int_{V_1}\omega \wedge \kappa_g =0.
\end{equation}
Now we proceed to find out how the family of self-dual harmonic $2$-forms $\kappa_{g_t}$ looks like during $Q(R_1)$:

The triple composite $V_1$ has a Kirby diagram with three $2$-handles that represent $F$, $E_1$, $E_2$ (see Figure \ref{fig:V1_Kirby}). In this diagram, when we stretch $S_2$, the sphere $E_1$ would break off. A better Kirby diagram is compatible with the stretch. Hence we do a handle-slide to achieve the following diagram Figure \ref{fig:single_E2_Kirby}. It has three $2$-handles that represent $F$, $E_1 + E_2$, $E_2$, where $E_1 +E_2$ is a sphere that has no intersection with $E_2$ (and hence $S_2$), so it would not break off when we stretch $S_2$ (see Figure \ref{fig:slide_E1}). 
\begin{figure}
    \begin{Overpic}{\includegraphics[scale=0.6]{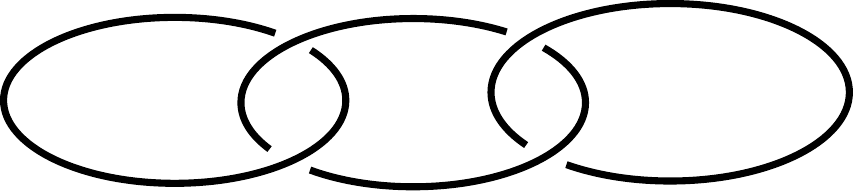}}
     \put(10,21){$0$}
     \put(0,0){$F$}
    \put(56,-3){$E_1$}
        \put(41,21){$-1$}
        \put(89,21){$-1$}
        \put(95,0){$E_2$}
        
    \end{Overpic}
    \caption{Kirby diagram of $V_1$}
    \label{fig:V1_Kirby}
\end{figure}

\begin{figure}
    \begin{Overpic}{\includegraphics[scale=0.6]{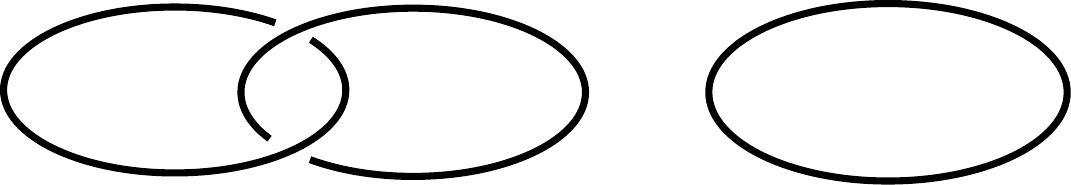}}
     \put(10,18){$0$}
     \put(0,0){$F$}
    \put(50,0){$E_1+E_2$}
        \put(41,18){$0$}
        \put(83,18){$-1$}
        \put(95,0){$E_2$}
        
    \end{Overpic}
    \caption{Kirby diagram compatible with stretching $S_2$}
    \label{fig:single_E2_Kirby}
\end{figure}
 \begin{figure}
    \begin{Overpic}{\includegraphics[scale=0.6]{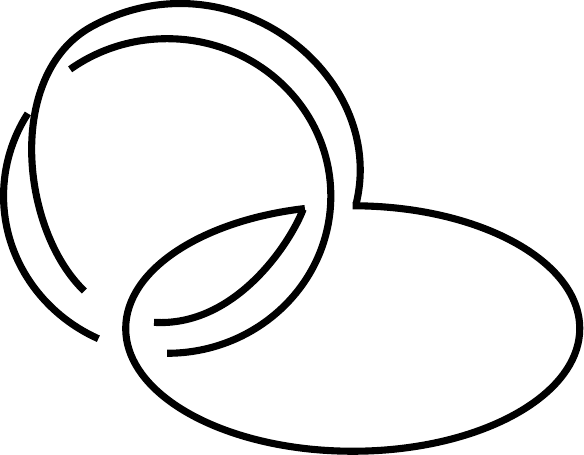}}   
     \put(-7,25){$E_2$}
    \put(90,0){$E_1+E_2$}
    \end{Overpic}
    \caption{This figure is obtained from Figure \ref{fig:R1} (they are \emph{not} Kirby diagrams). $E_1 + E_2$ and $E_2$ do not have intersection, and cutting $E_2$ would not change $E_1+E_2$.}
    \label{fig:slide_E1}
\end{figure}

After stretching $S_2$, the left-hand-side of Figure \ref{fig:single_E2_Kirby} becomes a punctured $S^2 \times S^2$. One can choose a metric such that any self-dual cohomology class is proportional to $[F]+ [E_1 + E_2]$. On the right-hand-side it is a punctured $\overline{\CP^2}$, and any self-dual cohomology class would be trivial. Take 
\[
\mathcal{B} = \langle F, E_1 +E_2, E_2\rangle
\]
as a basis of $H^2$, where we consider the Poinc\'are duals of the elements in $\mathcal{B}$. Under this basis any self-dual cohomology class is the element $(x,x,0)$. One can take $x = 1$ (We will prove it does not depend on the choice).

To stretch $S_1$,  we perform the following transformations to obtain a compatible Kirby diagram:
\begin{enumerate}
\item We start from Figure \ref{fig:V1_Kirby}.
\item Slide the handle representing $F$ over the handle representing $E_1$. The resulting handle represents $F+E_1$, and links only to the handle representing $E_2$. The resulting handle is not linked to $E_1$ since $E_1$ has self-intersection $-1$.
\item Slide the handle representing $E_2$ over the handle representing $E_1$. The resulting handle represents $E_1 +E_2$, and it is not linked to $E_1$.
\item Slide the handle representing $E_1 +E_2$ over the handle representing $F+E_1$. The resulting handle represents $F+ 2E_1 +E_2$ (see Figure \ref{fig:single_Kirby}), and they are unlinked from each other.
\end{enumerate}
\begin{figure}
    \begin{Overpic}{\includegraphics[scale=0.6]{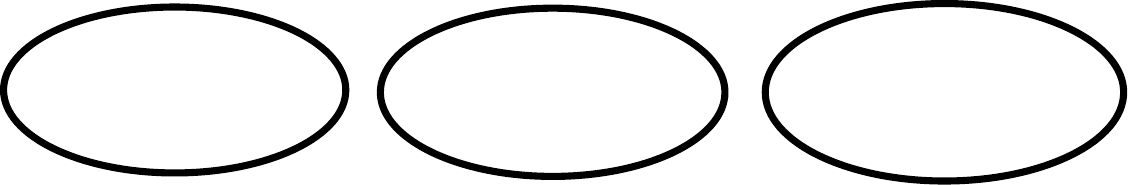}}
     \put(10,18){$1$}
     \put(0,-3){$F+E_1$}
    \put(50,-3){$F+ 2E_1+E_2$}
        \put(41,18){$-1$}
        \put(83,18){$-1$}
        \put(95,-3){$E_1$}
    \end{Overpic}
    \caption{}
    \label{fig:single_Kirby}
\end{figure}

On the resulting diagram, the intersection form is a diagonal matrix, and one can choose some metric such that the Hodge star operator is $(\id, -\id,-\id)$. Hence any self-dual cohomology class is proportional to $F+E_1$. Under the chosen basis 
\[
\mathcal{B} =\langle F, E_1 +E_2, E_2\rangle
\]
it looks like $(y,y,-y)$ for some $y$. One can take $y = 1$ (We will prove it does not depend on the choice). Reparameterize $t\in (-\infty, +\infty)$ by $t\in [0,1]$. Then during the homotopy $Q(R_1)$, the reference family $\kappa_{g_t}$ can be chosen to be $(1,1,t-1)$. We will prove it does not depend on the choice of the path.

In Figure \ref{fig:single_E2_Kirby}, we have a connected sum of $S^2\times S^2$ and $\overline{\CP^2}$. Since it has no 2-torsion in $H^2$, any spin$^c$-structure must have the form \[2aF + 2b(E_1+E_2)+(2c+1)E_2\]for some integer $a$, $b$, and $c$, which we write as
\[
(2a, 2b, 2c+1).
\]
The fixed perturbation $\omega$ is small enough in \eqref{equ:wall}, so without loss of generality, we have 
\begin{equation}\label{equ:cup-product-matrix}
( c_1(\fs) \cup [\kappa_{g_t}])[V_1]+  \frac{4i}{2\pi}\int_{V_1}\omega \wedge \kappa_{g_t}= (1,1,t-1)\begin{pmatrix}
0 & 1  & 0\\
1 & 0 & 0\\
0 &0 &-1
\end{pmatrix}\begin{pmatrix}
2a \\
2b\\
2c+1 
\end{pmatrix} + \epsilon_1 t+ \epsilon_2 =0
\end{equation}
which is
\begin{equation}\label{equ:wall-crossing-formula}
2a + 2b +(2c+1)(1-t)+ \epsilon_1 t+ \epsilon_2 = 0,
\end{equation}
for small fixed $\epsilon_1$ and $\epsilon_2$. 

Since the left-hand-side of \eqref{equ:wall-crossing-formula} is a linear function on $t$, it suffices to compare the evaluations at $t\in\{0,1\}$. There are two conditions for a spin$^c$-structure $(2a,2b,2c+1)$ to admit solutions:
\begin{enumerate}
\item $|2a + 2b| <|2c+1|$.
\item If $2a + 2b =0$ then $2c+1$ and $\epsilon_1+\epsilon_2$ do not have the same sign. If $2a + 2b \neq 0$ then $2c+1$ and $2a+2b$ do not have the same sign.
\end{enumerate}

We can describe the sign contribution of solutions from the wall-crossing as follows. When $t$ goes from $0$ to $1$, if for one solution, the left-hand-side of \eqref{equ:wall-crossing-formula} goes from negative to positive, and for another solution, it goes from positive to negative, then the sign contributions from the wall-crossing are opposite. If both are from negative to positive or positive to negative, then the sign contributions are the same.

We classify spin$^c$ structures that satisfy this equation for some $t\in [0,1]$ by following three types:
\begin{enumerate}
\item $2a + 2b \neq 0$.
\item $2a + 2b =0$ and $ a \neq 0$.
\item $a=b=0$.
\end{enumerate}
We will examine the cobordism maps induced by these structures and show that the first two types contribute to $0$ and the last type contributes to an isomorphism.

Recall that in the triple composite 
\[
V_1 = W_1\cup_{Y_2} W_2\cup_{Y_3}W_3,
\]
we have $6$ disks $\Dc_i$ and $\Dcc_i$ in $W_i$ for $i=1,2,3$ and the superscripts representing the core disk and the cocore disk, respectively

Note that $\partial \Dc_1 \subset S^3$, so the holonomy around $\partial \Dc_1$ is zero. So by Stokes' theorem,
\[
\int_{\Dc_1} F_{A^t_z} =0.
\]
Similarly we have 
\[
\int_{\Dcc_3} F_{A^t_z} =0.
\]
Hence we only need to consider the behavior of those structures on $E_1 = \Dcc_1\cup \Dc_2$ and $E_2 = \Dcc_2\cup \Dc_3$.

For type (1) structures, we have the following symmetry:
\[
(2a, 2b,2c+1) \leftrightarrow (-2a, -2b,-2c-1)
\]
Notice that if $c_1(\fs) = (2a, 2b,2c+1)$, then from the intersection form, we have
\begin{align*}
\frac{i}{2\pi}\int_{E_1} F_{A^t} &= \langle c_1(\fs), [E_1]\rangle\\
&= (0,1,-1)\begin{pmatrix}
0 & 1  & 0\\
1 & 0 & 0\\
0 &0 &-1
\end{pmatrix}\begin{pmatrix}
2a \\
2b\\
2c+1 
\end{pmatrix} \\
&= 2a+2c+1
\end{align*}
and 
\begin{align*}
\frac{i}{2\pi}\int_{E_2} F_{A^t} &= \langle c_1(\fs), [E_2]\rangle \\
&= (0,0,1)\begin{pmatrix}
0 & 1  & 0\\
1 & 0 & 0\\
0 &0 &-1
\end{pmatrix}\begin{pmatrix}
2a \\
2b\\
2c+1 
\end{pmatrix}\\
&= -2c-1
\end{align*}
So the map of local system is given by the multiplication with
\[
(u^{2a+2c+1}u^{-2c-1}u^\mathcal{T})^{\frac{1}{2}}
\]
for some topological term $\mathcal{T}$. 

Similarly, for $(-2a, -2b,-2c-1)$ the local system map is given by 
\[
(u^{-2a-2c-1}u^{2c+1}u^\mathcal{T})^{\frac{1}{2}}
\]
Note that for each pair $Dcc_i \cup Dc_{i+1}$ the Euler number is $2$ and the self-intersection is $-1$. So no matter how we choose the framing (for example, we adopt the setting in Proposition \ref{prop: first homotopy example}), we have $\mathcal{T} = 3\cdot (2-1) = 3$, though we do not need this fact because the topological terms are the same for all spin$^c$ structures. Then the difference between this pair is 
\[
(u^{2a+2c+1}u^{-2c-1}u^3)^{\frac{1}{2}}(u^{-2a-2c-1}u^{2c+1}u^3)^{-\frac{1}{2}}=u^{2a}.
\]
When $u = -1$, this term is $1$. Meanwhile, these solutions have the wall crossing over them with opposite sign contributions. Hence the overall effect is that $(2a, 2b,2c+1)$ cancels out with $ (-2a, -2b,-2c-1)$.

For type (2) structures, we conclude that they have no solutions from the following lemma \blem\label{lem: no solution}
For spin$^c$-structure $(2a,2b,2c+1)$ where $a \neq 0$, there is no solution. \elem
\begin{proof}   
Suppose $[S^2]$ is a cohomology class represented by $\{\pt\}\times S^2 \subset S^1\times S^2 = R_1$. We learn from Figure \ref{fig:R1} that $[S^2] = [E_1] + [E_2]$. Then from the intersection form, we have
\begin{align*}
\langle c_1(\fs), [S^2]\rangle
&=
\langle c_1(\fs), [E_1+E_2]\rangle\\
&=(0,1,0)\begin{pmatrix}
0 & 1  & 0\\
1 & 0 & 0\\
0 &0 &-1
\end{pmatrix}\begin{pmatrix}
2a \\
2b\\
2c+1 
\end{pmatrix} \\
&= 2a
\end{align*}
However, for $\langle c_1(\fs), [S^2]\rangle \neq 0$, there are no critical points of the Chern--Simons--Dirac functional. This is from the proof of \cite[Proposition 4.2.1]{kronheimer2007monopoles}, which is ultimately from the fact that reducible solutions on closed $3$-manifolds must be in torsion spin$^c$ structures.
\end{proof}

\begin{remark}
    Lemma \ref{lem: no solution} applies to both type (1) and (2) structures, which is important to show that there are only finitely many solutions in the collection of spin$^c$ structures with fixed absolute value of the $\langle c_1(\fs),[E_1]\rangle$, where this fixed condition corresponds to the fixed power of $U$ from the discussion in \cite[Page 513]{kronheimer2007monopolesandlens}. On the other hand, the discussion of type (2) structures in \cite[Page 91]{Freeman2021triangle} does not work for the following reasons. Freeman used the partial symmetry \[(2a,2b,2c+1)\leftrightarrow(-2a,-2b,2c+1)\]to cancel the solutions, where the symmetry is from the spin$^c$ conjugation on the summand $S^2\times S^2$ in the decomposition of Figure \ref{fig:single_E2_Kirby}. However, the computation of the signs in his local system indeed uses the partial symmetry from the spin$^c$ conjugation on the summand $\ov{\CP^2}$ containing $E_1$ in Figure \ref{fig:single_Kirby}, which induces\[(2a,2b,2c+1)\leftrightarrow(-2a-2(2c+1),2(2a+b+2c+1),2c+1)\]via the basis change\[\begin{aligned}
        2aF+&2b(F+E_1)+(2c+1)E_2\\&=(2a+2b-2c-1)(F+E_1)+(2c+1)(F+2E_1+E_2)-(2a+2c+1)E_1.
    \end{aligned}\]Meanwhile, Freeman does not write the perturbation term in \eqref{equ:cup-product-matrix} explicitly, which makes the discussion on type (2) and (3) structures become subtle because the solution is at the endpoint $t=1$.
\end{remark}
Finally, we come to the only nonzero contribution, which is from the type (3) spin$^c$-structures. From the second condition for \eqref{equ:wall-crossing-formula} to admit a zero, we know that only one structure of the symmetry pair
\[
(0, 0,2c+1) \leftrightarrow (0, 0,-2c-1)
\]
contributes a solution. From the dimension formula \cite[Lemma 5.7, Corollaries 5.8, and 5.9]{kronheimer2007monopolesandlens}, we deduce that this structure contributes a factor of $\pm U^{k(k+1)/2}$. So the overall contribution of $L$ is 
\[
\sum_{k\geq 0}\pm U^{k(k+1)/2}.
\]
In particular, the leading term $\pm 1$ is invertible. So $L$ is an isomorphism and we finish the proof of Proposition \ref{prop:L-iso}.

\section{Triangle over integer coefficients}\label{sec: Triangle over integer coefficients}
In this section, we prove Theorem \ref{thm: main triangle} by picking suitable $\omega$ in Theorem \ref{thm: local system triangle} and show that Theorem \ref{thm: main triangle} recovers the triangle in \cite[\S 4]{LRS2023triangle} over $\Q$.

\subsection{Homological trick}
In this subsection, we apply the homological trick to obtain the surgery exact triangle without local system from that with local system. The main idea follows from \cite[\S 3]{LY2025dimension}. See also \cite[\S 2.3]{alfieri2020framed} and \cite[\S 2.2]{baldwin2020concordance}. We start with the following notation and computation. 


Let $\Dc_i$ and $\Dcc_i$ denote the core disk and the cocore disk of the surgery cobordism $W_i:Y_i\to Y_{i+1}$ for $i\in\Z/3$ and let $D_i\subset Y_i$ be the meridian disk of the Dehn filling solid torus, oriented by its boundary $\ga_i$. Note that the condition on $\ga_i$ implies that \[\ga_0+\ga_1+\ga_2=0\aand K_i=\ga_{i+1}\]in the boundary torus. Note that \[\partial \Dc_i=-K_i=-\ga_{i+1},~\partial \Dcc_i=K_{i+1}=\ga_{i+2}=\ga_{i-1},\]\[\aand\partial (\ga_i\times I)=(-\ga_i\times \{0\})\cup (\ga_i\times \{1\}),\]where the orientations of the core and cocore disks are from their boundaries. Up to isotopy, we have\[\Dc_i=(\ga_{i+1}\times I)\cup (-D_{i+1})\aand \Dcc_i=(-D_i)\cup (-\ga_i\times I)\cup (-D_{i+1}),\]for which we can compute the boundary as a sanity check. 


Let $\nu_i\subset W_i$ be the union of $\Dc_i$ and $\Dcc_i$. For fixed $j\in\Z/3$, we pick $\omega=-\ga_j$, isotoped to be away from the surgery region. Since the indices are cyclically ordered, the choice of $j$ does not matter. To be consistent with the notation in \cite[\S 4]{LRS2023triangle}, we pick $j=1$. Let $\nu_i'=\nu_i\cup (\omega \times I)$. Then \[\begin{aligned}
    \nu_2'=&(\ga_0\times I)\cup (-D_0)\cup (-D_2)\cup (-\ga_2\times I)\cup (-D_0)\cup (-\ga_1\times I)\\=&2\cdot(\ga_0\times I)\cup 2\cdot (-D_0)\cup (-D_2)\\=&2\cdot \Dc_2\cup (-D_2)
\end{aligned}\]Since\[\Dc_0=(\ga_1\times I)\cup (-D_1)\aand \Dcc_1=(-D_1)\cup (-\ga_1\times I)\cup (-D_2),\]we have\[\nu_0'= (-D_1)\cup \Dcc_0\aand \nu_1'=D_1\cup\Dc_1\cup 2\cdot \Dcc_1\cup D_2.\]

Let\[\nu_2''=\nu_2'\cup D_2=2\cdot \Dc_2,~\nu_0''=\nu_0'\cup D_1=\Dcc_0,\]\[\aand \nu_1''=(-D_1)\cup \nu'_1\cup (-D_2)=\Dc_1\cup 2\cdot \Dcc_1.\]

Then we have the following corollary of Theorem \ref{thm: local system triangle}.

\bcor\label{cor: triangle 1}
Let $(Y_0,Y_1,Y_2)$ be a surgery triad and let $W_i:Y_i\to Y_{i+1}$ for $i\in\Z/3$ be the corresponding surgery cobordism. Let $\Dc_i$ and $\Dcc_i$ be the core disk and the cocore disk in the corresponding cobordism, respectively. Then there exists an exact triangle
\[	\xymatrix{
	HM(Y_0)\ar[rr]^{HM(W_0;\Dcc_0)}&& HM(Y_1;K_1)\ar[dl]^{\quad HM(W_1;\Dc_1)}\\
	&HM(Y_2)\ar[lu]^{HM(W_2)\quad}&
	}\]
\ecor
\bpf
From Theorem \ref{thm: local system triangle} and the above computation, we obtain the following exact triangle\[	\xymatrix{ HM(Y_0)\ar[r]^<<<<<<<{HM(W_0;\nu_0'')}& HM(Y_1;K_1)\cong HM(Y_1;K_1\cup \ga_1)\ar[d]^{\quad HM(W_1;\nu_1'')}\\
	&HM(Y_2)\cong HM(Y_2;\ga_2)\cong HM(Y_2;2 K_2)\ar[lu]^{HM(W_2;\nu_2'')\quad}
	}\]where the isomorphisms are induced by those for $D_1$ and $D_2$ from Lemma \ref{lem: I_S is an iso}, and those for $2\cdot\Dc_2$ and $2\cdot \Dcc_1$ from Lemma \ref{lem: cob map depending on H_2}.
\epf
\brem
One may use Proposition \ref{prop: first homotopy example} and a modification of the local system set in the construction of the second homotopy in \S \ref{subsec: second homotopy} to prove Corollary \ref{cor: triangle 1} directly.
\erem
We want to further remove the local system set $K_1$ in Corollary \ref{cor: triangle 1} to obtain Theorem \ref{thm: main triangle}. This is achieved by assuming $[K_1]=0\in H_1(Y_1;\Z/2)$, or equivalently $[\ga_2]\in H_1(Y_1;\Z/2)$. This is always possible for one index $i$ because\[\ker\big(H_1(\partial (Y_i\backslash \mathring{\nu}(K_i));\Z/2)\to H_1( Y_i\backslash \mathring{\nu}(K_i);\Z/2)\big)\cong \Z/2\]from the ``half-live-half-die" theorem and $\ga_0,\ga_1,\ga_2$ represent the three nontrivial elements in \[H_1(\partial (Y_i\backslash \mathring{\nu}(K_i));\Z/2)\cong (\Z/2)^2.\]
\bcor[{Theorem \ref{thm: main triangle}}]\label{cor: triangle 2}
Let $(Y_0,Y_1,Y_2)$ be a surgery triad and let $W_i:Y_i\to Y_{i+1}$ for $i\in\Z/3$ be the corresponding surgery cobordism. Let $\Dc_i$ and $\Dcc_i$ be the core disk and the cocore disk in the corresponding cobordism, respectively. Suppose $[K_1]=0\in H_1(Y_1;\Z/2)$ and suppose $S\subset Y_1$ is a $2$-chain with $\partial S=K_1-2\eta$ for some $1$-chain $\eta\subset Y_1$. Note that such $S$ always exists. Then there exists an exact triangle
\[	\xymatrix{
	HM(Y_0)\ar[rr]^<<<<<<<<<<<<<<<{HM(W_0;\Dcc_0\cup (-S))}&& HM(Y_1)\cong HM(Y_1;2\eta)\ar[dl]^{\quad HM(W_1;\Dc_1\cup S)}\\
	&HM(Y_2)\ar[lu]^{HM(W_2)\quad}&
	}\]
\ecor
\bpf
This follows from Corollary \ref{cor: triangle 1} and Lemma \ref{lem: I_S is an iso}.
\epf
\subsection{Recovering triangle over rationals}
In this subsection, we show that the exact triangle in Corollary \ref{cor: triangle 2} recovers the triangle in \cite[Theorem 4.6]{LRS2023triangle} over $\Q$. The strategy in the reference is to provide an ad hoc arrangement of signs for cobordism maps in different spin$^c$ structure components so that the composition of two consecutive modified cobordism maps in the triangle vanishes. We will show that this ad hoc arrangement coincides with our signs from the local system, so that it becomes more natural.

We review the ad hoc sign arrangement as follows. We write $M=Y_i\backslash  \mathring{\nu}(K_i)$ for the knot complement, which is denoted by $Y$ in the reference. We assume that \[[K_1]=[\ga_2]=0\in H_1(M;\Z/2).\]Let $\fs_0$ be a fixed self-conjugate spin$^c$ structure on $M$ and let $\spinc(W_i,\fs_0)$ denote the set of spin$^c$ structures on $W_i$ with restriction $\fs_0$ on $M$, which is an affine space over\begin{equation}\label{eq: affine}
    \ker(H^2(W_i)\to H^2(M)).
\end{equation}For $i\in\Z/3$, we fix a base element $\ft_i\in \spinc(W_i,\fs_0)$ (denoted by $\fs_1$ and $\fs_2$ in the reference) so that elements in $\spinc(W_i,\fs_0)$ are written as $\ft_i+h$ for $h$ in \eqref{eq: affine}. 


Define
\[
\mu : \bigcup_{i \in \Z/3} \spinc(W_i,\fs_0) \to \Z/2
\]
as follows.
\begin{itemize}
    \item $\mu$ is identically zero on $\spinc(W_2,\fs_0)$;
    \item For $i=0,1$ and $\fs=\ft_i+h\in \spinc(W_i,\fs_0)$, let $\mu(\fs) = h_{\Z/2}$, where
    \[
    h_{\Z/2} \in \ker(H^2(W_i; \Z/2) \to H^2(M; \Z/2)) = \Z/2
    \]
    is the mod $2$ reduction of $h$.
\end{itemize}
Then the modified cobordism map is\begin{equation}\label{eq: modified cob}
    F_{W_i}=\sum_{\substack{\fs_0\in \spinc(M)\\\text{self-conjugate}}}\sum_{\fs\in \spinc(W_i,\fs_0)}(-1)^{\mu(\fs)}\cdot HM(W_i,\fs).
\end{equation}
\bprop\label{prop: recover LRS}
For suitable choices of $\ft_0$ and $\ft_1$ for each self-conjugate $\fs_0\in \spinc(M)$, the modified cobordism map \eqref{eq: modified cob} agrees with the cobordism map in Corollary \ref{cor: triangle 2}.
\eprop
\bpf
The case $i=2$ is trivial. The proofs for $i=0$ and $i=1$ are similar. We only consider the case $i=0$. 

We first show that $\Dc_0$ vanishes under the composition \begin{equation*}
    H_2(W_0,\partial W_0;\Z/2)\xrightarrow{\cong}H^2(W_0; \Z/2) \to H^2(M; \Z/2).
\end{equation*}For any element $\sigma\in H_2(M;\Z/2)$, the evaluation of the image of $\Dc_0$ is the intersection of $\Dc_0$ with the image of $\sigma$ in $H_2(W_0;\Z/2)$. Since $\Dc_0$ is disjoint from $M$, we know there is no intersection point and hence we obtain the desired fact.

Suppose $S\subset Y_1$ is a $2$-chain with $\partial S=K_1-2\eta$ for a $1$-chain $\eta\subset M$. Let $\Sigma=\Dcc_0\cup (-S)$ and let $\Sigma'$ be the union of $\Sigma$ and an unoriented annulus $\eta\times I$, which is a non-orientable closed $2$-chain and represents a homology class in $H_2(W_0;\Z/2)$.

Since $\Sigma'$ and $\Dc_0$ intersect at one point, we know $\Sigma'$ is a generator of $H_2(W_0;\Z/2)$ and $\Dc_0$ is a generator of $H_2(W_0,\partial W_0;\Z/2)$. Hence the Poinc\'are dual of $\Dc_0$ generates \begin{equation*}
 \ker(H^2(W_0; \Z/2) \to H^2(M; \Z/2))=\Z/2.
\end{equation*}
Then we have 
\[h_{\Z/2}=\langle h, \Sigma'\rangle_{\Z/2},\]where $\langle \cdot,\cdot\rangle _{\Z/2}$ is the pairing for homology and cohomology over $\Z/2$. 

We first deal with the special case $\eta=0$. Then $\Sigma'=\Sigma$ and \[h_{\Z/2}=\langle h, \Sigma\rangle\pmod 2,\]

On the local system side, from \eqref{eq: cob map} and Example \ref{exmp: closed}, we know \[HM(W_0;\Sigma,\fs)=(-1)^{\frac{1}{2}(\langle c_1(\fs),\Sigma\rangle+\chi(\Sigma)+\Sigma\cdot \Sigma)}\cdot HM(W_0,\fs).\]Note that $\chi(\Sigma)+\Sigma\cdot \Sigma$ is independent of $\fs$, and \[\frac{\langle c_1(\ft_0+h),\Sigma\rangle}{2}-\langle h,\Sigma\rangle=\frac{\langle c_1(\ft_0),\Sigma\rangle}{2}\]is also independent of $\fs$. Hence we have\[HM(W_0;\Sigma)=(-1)^{\frac{1}{2}(\langle c_1(\ft_0),\Sigma\rangle+\chi(\Sigma)+\Sigma\cdot \Sigma)}\cdot F_{W_0}.\]We can choose $\ft_0$ for $\fs_0$ so that $\langle c_1(\ft_0),\Sigma\rangle+\chi(\Sigma)+\Sigma\cdot \Sigma$ is a multiple of $4$. Hence the two maps agree.

Now we prove the case for $\eta\neq 0$. Since $h$ is in \eqref{eq: affine}, it has a lift $\wti{h}\in H^2(W_0,M)$. Note that $\Sigma$ represents a homology class in $H_2(W_0,M)$. Then we also have\begin{equation}\label{eq: h2 new def}
    h_{\Z/2}=\langle \wti{h},\Sigma\rangle \pmod 2.
\end{equation}For different choices of lifts $\wti{h}$ and $\wti{h}'$, we have\[\langle \wti{h}-\wti{h}',\Sigma\rangle=\langle \delta H,\Sigma\rangle =\langle H,\partial \Sigma\rangle=2\langle H,\eta\rangle\]where $H\in H^1(M)$ and $\delta:H^1(M)\to H^2(W_0,M)$ is the boundary map. Hence \eqref{eq: h2 new def} does not depend on the lift. To pin down the evaluation $\langle \wti{h},\Sigma\rangle$ over $\Z$, we need to fix a lift $\ov{h}\in H^2(W_0,\partial \Sigma)$, or equivalently a framing $\tau_h$ of $L_h|_{\partial \Sigma}$, the restriction of the complex line bundle $L_h$ with $c_1(L_h)=h$. We fix framings $\tau_h$ for all $h\in H^2(W)$ that are the same for two copies of $\eta$, and hence fix the lifts $\ov{h}$. Since\[P_{\det}(\ft_0+h)=P_{\det}(\ft_0)\ot L_h^{\ot 2},\]the framing of $P_{\det}\ft_0|_{\partial \Sigma}$ and $\tau_h$ induce a framing of $P_{\det}(\ft_0+h)|_{\partial \Sigma}$.

We pick a framing \[\tau(\eta)=(\tau^\fs(\eta))_{\fs\in \spinc(Y_1)}=(\tau^\fs_{\det}(\eta),\tau^\fs_N(\eta),\tau^\fs_T(\eta))_{\fs\in \spinc(Y_1)}\]of $\eta$ as in Definition \ref{defn: framing} such that $\tau_N(\eta)$ and $\tau_T(\eta)$ are induced by the boundary framing of $\Sigma$, and $\tau^\fs_{\det}(\eta)$ is induced by the above fixed choice of $\tau_h$ and $\tau^{\ft_0}_{\det}$. Note that $\tau(2\eta)$ is always admissible by Remark \ref{rem: admissible 2eta}. We consider the relative Euler class \[e(P_{\det}\fs|_{\Sigma},\tau^\fs_{\det}(2\eta))\in H^2(\Sigma,\partial \Sigma=2\eta)\]and use $\fs$ to denote $P_{\det}\fs|_{\Sigma}$ for simplicity. Then we have \[\frac{\langle e(\ft_0+h,\tau^{\ft_0+h}_{\det}(2\eta)),\Sigma\rangle}{2}-\langle \ov{h},\Sigma\rangle=\frac{\langle e(\ft_0,\tau^{\ft_0}_{\det}(2\eta)),\Sigma\rangle}{2}.\]

From \eqref{eq: cob map 2} and the proof of Lemma \ref{lem: even}, we have a generalization of Example \ref{exmp: closed}\[HM(W_0;\wti{\Sigma},\fs)=(-1)^{\frac{1}{2}(\langle e(\fs,\tau^\fs_{\det}(\eta)),\Sigma\rangle+\chi(\Sigma)+\Sigma\cdot \Sigma)}\cdot HM(W_0,\fs),\]where $\wti{\Sigma}=(\Sigma',\kappa=\eta\times \{1/2\},\tau(\kappa))$ is the enhanced local system set from Remark \ref{rem: enhanced level set}. The rest of the proof is similar to the case of $\eta=0$.
\epf
\section{Spectral sequence}\label{sec: Spectral sequence}
In this section, we prove Theorems \ref{thm: main ss} and \ref{thm: main kh} by adapting the results in \cite{Bloom2011link,scaduto2015instantons,kronheimer2011khovanov} and applying the trick in \S \ref{sec: Triangle over integer coefficients}. We will omit verbatim details in the proof and only point out the difference. Throughout this section, let $|v|$ for $v\in\{0,1\}^l$ be the $L^1$ norm of $v$ and equip $\{0,1\}^l$ with a partial order induced by $0<1$.

First, as a standard iterating generalization of Theorem \ref{thm: local system triangle} for a link, we obtain the following spectral sequence.
\bprop\label{prop: ss local system}
Let $L$ be a framed link in a closed oriented $3$-manifold $Y$ with ordered $l$ components. Let $v\in\{0,1\}^l$ and let $Y_v=Y_v(L)$ be obtained from $Y$ by surgery on $L$ via the slopes in $v$. Let $\eta_{v}$ be the union of core knots in $Y_v$. Let $\omega\subset Y\backslash L$ be a (possibly empty) $1$-submanifold away from the surgery region. Then there exists a spectral sequence depending only on $(Y,L)$\[E^1=\bigoplus_{v\in \{0,1\}^l}HM(Y_v;\eta_v\cup \omega)	\Longrightarrow HM(Y;L\cup \omega).\]Moreover, we have\[d^1=\sum_{\substack{|v-w|=1\\v<w}} (-1)^{\delta(v,w)}HM(W_{vw},\mu_{vw};\nu_{vw}\cup (\omega\times I)),\]where $W_{vw}=W_{vw}(L)$ is the surgery cobordism with source $Y_v$ and target $Y_w$, $\nu_{vw}$ is the union of the core disks and the cocore disks in $W_{vw}$, $\delta(v,w)=\sum_{i=1}^jv_i$ for the entry $j$ that $v$ and $w$ differ by, and $\mu_{vw}$ is the homology orientation from \cite[Lemma 6.1]{kronheimer2011khovanov} satisfying $\mu_{vw}=\mu_{uw}\circ \mu_{vu}$ for $v\le u\le w$ in $\{0,1\}^l$. Moreover, the spectral sequence inherits a $\Z\op\Z/2$-grading so that the grading shift of the differential $d^r$ in the $r$-th page is $(r,1)$.
\eprop
\bpf
We first consider the case of $HM=\widecheck{HM}_\bu$. The cases of $\widehat{HM}_\bu$ and $\overline{HM}_\bu$ are similar. Later, we will consider the case of $\widetilde{HM}_\bu$.

The proof is an adaptation of \cite[\S 4 and 7]{Bloom2011link} with signs from \cite[\S 6]{scaduto2015instantons} and \cite[\S 7]{kronheimer2011khovanov}. See also the end of \cite[\S 2]{Lin19involutive} for a similar setup without signs. We mostly follow \cite[\S 6]{scaduto2015instantons} and use the similar notation. 

For $v\le w$ in $\{0,1\}^l$, let $W_{vw}$ be the composition of surgery cobordisms between $Y_v$ and $Y_w$ and let $\nu_{vw}\subset W_{vw}$ be the union of the corresponding local system sets. In particular, we set $(W_{vv},\nu_{vv})=(Y_v,\nu_v)\times I$. 


Let $G_{vw}$ be the family of metrics on $W_{vw}$ constructed in \cite[\S 6.1]{scaduto2015instantons}. Let\[C(Y;\eta)=\widecheck{CM}_\bu(Y;\eta)\aand\]\[ m_{vw}=\widecheck{CM}_\bu(W_{vw},G_{vw},\mu_{vw};\nu_{vw}\cup (\omega\times I)):C(Y_v;\eta_v\cup \omega)\to C(Y_w;\eta_w\cup \omega),\]where $\widecheck{CM}_\bu(Y,\eta)$ is the chain complex of $\widecheck{HM}_\bu(Y,\eta)$, $m_{vw}$ is the map associated to the family of metrics constructed in the proof of \cite[Proposition 25.3.8]{kronheimer2007monopoles}. Here we orient the moduli spaces as in \cite[Equation (4.5)]{scaduto2015instantons}. See also the discussion after \cite[Equation (20)]{kronheimer2011khovanov}, which mentions a correction of signs in the proof of \cite[Proposition 25.3.8]{kronheimer2007monopoles}.

Define \[ s(v,w) = \frac{(|w - v|^2 - |w - v|)}{2} + |v|,
\]\[
C = \bigoplus_{v \in \{0,1\}^l} C(Y_v;\eta_v),\aand \pa = \sum_{v \leq w} \partial_{vw}=\sum_{v \leq w} (-1)^{s(v,w)}m_{vw}.
\]

Then the $C(Y_v;\eta_v) \to C(Y_w;\eta_{w})$ component of $\partial^2$ is

\[(-1)^{s(v,w)+|w|} \sum_{v \leq u \leq w} (-1)^{|w-u|(|u-v|-1)} m_{uw} m_{vu},\]which vanishes by \cite[Equation (6.1)]{scaduto2015instantons}.

Define the cubical filtration $FC$ on $C$ by \[F^iC=\bigoplus_{|v|\ge i}C(Y_v;\eta_v),\]which induces a spectral sequence with the $E^1$-page as in the statement of the proposition.

Similar to the proof of \cite[\S 6.3]{scaduto2015instantons}, we obtain that $(C,\partial)$ is quasi-isomorphic to $\widecheck{CM}_\bu(Y;L\cup \omega)$ via the composition of $l-1$ quasi-isomorphisms collapsing the cube in one coordinate. The ingredients of the construction of the quasi-isomorphisms are again the triangle detection lemma (cf.\ \cite[Lemma 5.1]{scaduto2015instantons} and \cite[Lemma 7.1]{kronheimer2011khovanov}) and the proof of the surgery exact triangle in Theorem \ref{thm: local system triangle}.

The $\Z/2$-grading and the invariance of the spectral sequence for auxiliary choices follow from \cite[\S 7]{Bloom2011link}. 

The case of $\widetilde{HM}_\bu$ follows from \cite[\S 8]{Bloom2011link}. For more detail, the $U$-map is defined by removing a $4$-ball in the cobordism and inserting an element in the perturbed monopole moduli space of $S^3$ corresponding to $U$. The chain complex $\widetilde{CM}_\bu$ of $\widetilde{HM}_\bu$ is defined as the mapping cone of such $U$-map. Based on this cobordism interpretation of $U$, the spectral sequence is obtained by extending the complex $(C,\partial)$ for $\widecheck{HM}_\bu$ to a $(l+1)$-dimensional cube with the last coordinate corresponding to $U$.
\epf
Then we prove the main theorems.

\bpf[Proof of Theorem \ref{thm: main ss}]
We generalize Corollary \ref{cor: triangle 1} to the case of a link. In the knot case, to make the local system set on $W_{0}$ vanish, we need to pick $\omega=\ga_2$, which is the meridian of the knot in $Y_2=Y$. In the general case, we pick \[\omega=\bigcup_{i=1}^lm_i\]for the meridian $m_i$ of the $i$-th component of $L$. Then the proof of Corollary \ref{cor: triangle 1} applies to each component of the link and we obtain the desired result from Proposition \ref{prop: ss local system}.
\epf
\brem
To generalize Corollary \ref{cor: triangle 2} to the case of a link and remove the local system sets for all $3$-manifolds involved in the cube and collapsed cubes in the proof of Theorem \ref{thm: main ss}, we need to assume that each component $K$ of $L$ satisfies $[K]=0\in H_1(Y\backslash (L\backslash K);\Z/2)$. This might not be satisfied in general (even for the core link $\eta_v\subset Y_v$) because now the ``half-live-half-die" theorem involves multiple boundary components of $Y\backslash\mathring{\nu}(L)$.
\erem
\bpf[Proof of Theorem \ref{thm: main kh}]
This follows from Theorem \ref{thm: main ss}, \cite[Proposition 9.1]{Bloom2011link}, \cite[Proposition 36.1.3]{kronheimer2007monopoles}, \cite[\S 7.8 and \S 8]{scaduto2015instantons}. Note that the surgery link $L'\subset \Sigma_2(L)$ represents the trivial homology class in $H_1(\Sigma_2(L);\Z/2)$ by the discussion after \cite[Equation (2.1)]{scaduto2015instantons}. Hence we can remove it from the local system set by Lemma \ref{lem: I_S is an iso}. The computation of the $\Z/2$-grading is from \cite[Theorem 1.5]{Bloom2011link}.
\epf
\section{Further direction on sutured monopole theory}\label{sec: Further direction on sutured monopole theory}
In this section, we sketch a modified definition of sutured monopole homology $SHM(M,\ga)$ of a balanced sutured manifold $(M,\ga)$ with our local system in \S \ref{sec: Local system}. We expect that the readers are familiar with \cite{kronheimer2007monopoles,kronheimer2010knots} and we use the notation in the reference freely.

To define $SHM(M,\ga)$, we need to pick a closure $(Y,R)$ of $(M,\ga)$, take\[SHM(M,\ga)=HM(Y|R)=\bigoplus_{\substack{\fs\in\spinc(Y)\\\langle c_1(\fs),R\rangle=2g(R)-2}}\widecheck{HM}_\bu(Y,\fs),\]and then prove the isomorphism class is independent of the choice of the closure. Note that $g(R)$ needs to be greater than $1$ so that all involved spin$^c$ structures are nontorsion. In the proof of the independence of $g(R)$ in \cite[Theorem 4.4]{kronheimer2010knots}, monopole Floer homology with the local system from \cite[\S 22.6 and \S 23.3]{kronheimer2007monopoles} was used. 

We hope to see what happens if we replace Kronheimer--Mrowka's local system with ours. The advantage of our local system is that the surgery exact triangle holds when $u=-1$, while one needs $2=0$ for the triangle over Kronheimer--Mrowka's local system (see \cite[Theorem 45.4.1]{kronheimer2007monopoles} and \cite[Theorem 5.12]{kronheimer2007monopolesandlens}). Following the notation in \S \ref{sec: Local system}, we write $\Ga'_\eta$ for Kronheimer--Mrowka's local system and $\Ga_\eta$ for our local system. In \cite{kronheimer2010knots}, Kronheimer--Mrowka further assumed that $t-t^{-1}$ is invertible for $t=\exp(1)$. We adopt this assumption in this section. Let the fiber of $\Ga_\eta$ be $\Z$ by setting $u=-1$. From Remark \ref{rem: u=t^-2}, we roughly have $u=t^2$ because there is a factor $1/2$. Hence $t-t^{-1}$ being invertible corresponds to $2$ being invertible in our construction, where $i$ disappears because we use the topological term to normalize the power.

The first important property is that if there is an integer cohomology class that evaluates at $1$ on $[\eta]$, then \begin{equation}\label{eq: HM zero}
    \ov{HM}_\bu(Y;\Ga'_{\eta})=0\aand \widecheck{HM}_\bu(Y;\Ga'_{\eta})\cong \widehat{HM}_\bu(Y;\Ga'_{\eta}),
\end{equation}which is \cite[Proposition 2.1]{kronheimer2010knots} and ultimately from \cite[Proposition 32.3.1]{kronheimer2007monopoles}. In such a case, we will use $HM$ to denote either $\widecheck{HM}_\bu$ or $\widehat{HM}_\bu$. The proof of \cite[Proposition 32.3.1]{kronheimer2007monopoles} starts with the general local system and then applies to $\Ga'_{\eta}$. Hence we can also apply the proof to $\Ga_\eta$ and obtain the following.

\bprop\label{prop: vanish}
If there is an integer cohomology class that evaluates at $1$ on $[\eta]$, then $\ov{HM}_\bu(Y;\Ga_{\eta})$ is annihilated by some power of $2$. If we do not set $u=-1$ in the local system, then $2$ is replaced by $1-u$.
\eprop
\bpf
The last paragraph in the proof of \cite[Proposition 32.3.1]{kronheimer2007monopoles} now implies that some power of $1-u^{p}$ annihilates $\ov{HM}_\bu(Y;\Ga_{\eta})$ because we take a factor of $1/2$ (see Remark \ref{rem: u=t^-2}) and the topological term vanishes. Note that $u=-1$ by our setting and $p=1$ from the evaluation assumption.
\epf
From Proposition \ref{prop: vanish}, if we consider the local system\[\widetilde{\Ga}_{\eta}=\Ga_{\eta}\ot \sR\]for some ring $\sR$ in which $2$ is invertible (e.g.\ $\sR=\Z[1/2], \Q$), then we have the similar result as in \eqref{eq: HM zero}. An interesting coincidence is that this invertible condition also appears in sutured instanton homology for which we need to take generalized eigenspaces of an operator with eigenvalues being multiple of $2$.

The second important property is that for $\eta=S^1\times \pt\times \pt \subset T^3$, we have\[HM(T^3;\Ga'_\eta)\cong \cR.\] From \cite[\S 37.3]{kronheimer2007monopoles}. Moreover, let $\widehat{E(1)}$ be the complement of the neighborhood of a regular fiber in a rational elliptic surface $E(1)$ and let $\widehat{\nu}\subset \widehat{E(1)}$ be obtained from a section $\nu\subset E(1)$ meeting the neighborhood of the fiber transversely in a disk. Then there is a relation between the relative invariants\[\psi(D^2\times T^2,D^2\times \pt)=(t-t^{-1})^{-1}\cdot \psi(\widehat{E(1)},\widehat{\nu})\in HM(T^3;\Ga'_\eta)\]from the end of \cite[\S 38.2]{kronheimer2007monopoles}.

We have the following corresponding result for our local system.

\bprop\label{prop: computation}
In the above setting, we have $HM(T^3,\widetilde{\Ga}_\eta)\cong \sR$ and \[\psi(D^2\times T^2,D^2\times \pt)=\pm \frac{1}{2}\cdot \psi(\widehat{E(1)},\widehat{\nu})\in HM(T^3,\widetilde{\Ga}_\eta)\]
\eprop
\bpf
The proofs in \cite[\S 37.3 and \S 38.2]{kronheimer2007monopoles} apply verbatim, except that we set $u=t^2=-1$. We need to tensor with $\sR$ so that Proposition \ref{prop: vanish} allows us to conclude $\ov{HM}_\bu(T^3,\widetilde{\Ga}_\eta)=0$. Note that the topological term must be odd and provide a factor of $t$. The ambiguity of the sign is from the topological term, which can be further calculated by considering the topological property of $\widehat{\nu}$.  
\epf

With Propositions \ref{prop: vanish} and \ref{prop: computation}, we may follow the definition of sutured instanton homology in \cite[\S 7]{kronheimer2010knots} to construct a modified sutured monopole homology as a $\sR$-module. More precisely, take a twisted closure $(Y,R,\omega)$ as in \cite[\S 7.4]{kronheimer2010knots} and define\[SHM(M,\ga;\widetilde{\Ga}_\omega)=HM(Y;\widetilde{\Ga}_\omega|R)\bigoplus_{\substack{\fs\in\spinc(Y)\\\langle c_1(\fs),R\rangle=2g(R)-2}}\widecheck{HM}_\bu(Y,\fs;\widetilde{\Ga}_\omega).\] Then one might show that the isomorphism class of $SHM(M,\ga;\widetilde{\Ga}_\omega)$ is independent of the choice of $(Y,R,\omega)$ similar to the proof for sutured instanton homology. Moreover, one might construct a projective transitive system of $SHM(M,\ga;\widetilde{\Ga}_\omega)$ and consider contact elements following Baldwin--Sivek \cite{BS2015naturality,baldwin2016contact}. Similar to \cite[Lemma 4.9]{kronheimer2010knots}, we might have\[SHM(M,\ga;\widetilde{\Ga}_\omega)\cong SHM(M,\ga)\ot \sR\]if $\sR$ has no $\Z$-torsion. We leave the details to future work.

\bibliographystyle{alpha}

\end{document}